\documentclass[11pt]{amsart}

\usepackage{amsfonts,epsfig}
\usepackage{latexsym}
\usepackage{amssymb}
\usepackage{amsmath}
\usepackage{amsthm}
\usepackage{graphics}
\usepackage[all]{xy}
\usepackage[T2A]{fontenc}
\usepackage{multirow}
\usepackage{hhline}
\usepackage{array, booktabs}
\usepackage{hyperref}
\usepackage{enumerate}

\usepackage{tabularray}
\usepackage{arydshln}

\usepackage{tikz}  
\usetikzlibrary{positioning}
\usepackage{ctable} 

\usepackage{tabularray}

\usepackage{longtable}
\usepackage{booktabs}
\usepackage{diagbox}

\newtheorem{defn}{Definition}

\newtheorem{lemma}[defn]{Lemma}

\newtheorem{thm}[defn]{Theorem}

\newtheorem{prop}[defn]{Proposition}

\theoremstyle{definition}
\newtheorem*{ack}{Acknowledgements}
\newtheorem{remark}[defn]{Remark}

\newcommand{\Q}{\mathbb Q}

\newcommand{\Z}{\mathbb Z}

\title[Squares in arithmetic progression]{Squares in arithmetic progression over\\ quadratic extensions of number fields}

\author{Enrique Gonz\'alez--Jim\'enez}
\address{Universidad Aut{\'o}noma de Madrid, Departamento de Matem{\'a}ticas, Madrid, Spain}
\email{enrique.gonzalez.jimenez@uam.es}

\thanks{The author is supported by Grant PID2022-138916NB-I00 funded by MCIN/AEI/ 10.13039/501100011033 and by ERDF A way of making Europe.}

\subjclass{Primary: 11B25, 11G05; Secondary: 14G05, 14H45.}
\keywords{Arithmetic progressions of squares, quadratic extensions, elliptic curves, rational points}

\begin{document}
\date{\today}


\begin{abstract}
We study arithmetic progressions of squares over quadratic extensions of number fields. Using a method inspired by an approach of Mordell, we characterize such progressions as quadratic points on a genus $5$ curve. Specifically, we determine the set of $K$-quadratic points on this curve under certain conditions on the base field $K$. Our main results rely on the algebraic properties of specific elliptic curves after performing a base change to suitable number fields. As a consequence, we establish that, under appropriate assumptions, any non-elementary arithmetic progression of five or six squares properly defined over a quadratic extension of $K$ must be of a specific form. Moreover, we prove the non-existence of such progressions of length greater than six under these assumptions.
\end{abstract}
\maketitle
\section{Introduction}
The study of squares in arithmetic progression dates back to Fermat. Specifically, in 1640, Fermat claimed that there are no four squares in arithmetic progression over $\Q$. Euler did not prove this conjecture until 1780. 

The next natural question in this line of research addresses the behavior of arithmetic progressions of squares in number fields. {Recall that an arithmetic progression of squares over a number field $K$ consists of elements $a_1,\ldots,a_n \in K$ such that $a_i^2 - a_{i-1}^2 = a_{i+1}^2 - a_i^2$ for $i \in\{ 2,\ldots,n-1\}$.} Xarles \cite{X}  established a significant result in the study of arithmetic progressions of squares in number fields. He proved the existence of a function $S(d)$ for any positive integer $d$, where $S(d)$ depends only on $d$, such that no arithmetic progression of squares of length $S(d)$ exists in any number field of degree $d$. Notably, Xarles demonstrated that $S(2) = 6$ for quadratic fields. This work was later extended by Bremner and Siksek \cite{BS}, who proved that S(3) = 5 for cubic fields. In the specific case \( d = 2 \), the author, in collaboration with Steuding \cite{Gon1}, studied the conditions under which arithmetic progressions of four squares exist within a given quadratic field. Furthermore, in joint work with Xarles \cite{GJX}, the author established several criteria for determining the quadratic number fields that admit arithmetic progressions of five squares. 

Recently, the author, jointly with Tho \cite{GJTho}, studied this problem in the case \( d = 4 \), that is, over quartic number fields. In particular, if \( D \) is a square-free integer, we characterize, under certain conditions on \( D \), arithmetic progressions of squares over quadratic extensions of \( K = \mathbb{Q}(\sqrt{D}) \). This study was based on a method of Mordell \cite{Mordell}. 

Expanding upon the construction developed in that article, we extend this characterization to quadratic extensions of number fields. We note that Tho \cite{tho} has announced that he has addressed the case \( K = \mathbb{Q}(\zeta_8) \), where we denote by \(\zeta_m\) a primitive \(m\)-th root of unity

The main results of this article are based on the algebraic structure of the elliptic curves  
\begin{equation*}
\begin{array}{l}
E_0\,:\, y^2 = x^3 - x^2 - 9x + 9, \\
E_1\,:\, y^2 = x^3 - x^2 + x, \\
E_4\,:\, y^2 = x^3 - x^2 - 64x + 220, \\
E_6\,:\, y^2 = x^3 - x^2 - 4x + 4, \\
\end{array}
\end{equation*}
after performing a base change to specific number fields. For any elliptic curve $E$ and integer $D$, we denote by $E^D$ the $D$-quadratic twist of $E$. 

Throughout the rest of this article, given a number field $K$, we will establish results under the assumption that certain conditions hold. These conditions are as follows:\\
    \indent $\bullet$ $ \texttt{cond}_A(K)$: \( E^{\pm 1}_1(K) = E^{\pm 1}_1(\mathbb{Q}) \), \( E_4(K) = E_4(\mathbb{Q}) \), and \( E^{\pm 1}_6(K) = E^{\pm 1}_6(\mathbb{Q}) \).\\
    \indent $\bullet$ $ \texttt{cond}_B(K)$:  \( \operatorname{rank}_{\mathbb{Z}} E_1^{\pm 1}(K) = 0 \) and either \( [K:\mathbb{Q}] < 8 \) or \( [K:\mathbb{Q}] \) is odd.

We have restricted ourselves to the case \( [K:\mathbb{Q}] < 8 \) because computing the rank of elliptic curves over number fields of degree greater than $7$ becomes intractable.

Let $n$ be a positive integer and $K$ be a field. Let $a_1, \dots, a_n \in K$ such that $(a_1^2, a_2^2, \dots, a_n^2)$ forms an arithmetic progression of length $n$. We say that this arithmetic progression is equivalent to $(s^2 a_1^2,s^2 a_2^2,\dots,s^2a_n^2)$ for any $s\in K^*$, as well as to its reverse $(a_n^2, \dots, a_2^2, a_1^2)$.  The arithmetic progression $(a_1^2,a_2^2,\dots,a_n^2)$ is properly defined over a number field $K$ if $a_1,\dots,a_n\in K$  and  $\{a_1,\dots,a_n\}\not\subset F$ for any proper subfield $F$ of $K$. Finally, an arithmetic progression is elementary if it is equivalent to a constant arithmetic progression or if an element in the arithmetic progression is $0$.  

The main results of this article are as follows.

\begin{thm}\label{main}
Let \( K \) be a number field with $K\ne\Q(\zeta_{12})$. Suppose that either $\texttt{cond}_A(K)$ or $\texttt{cond}_B(K)$ holds. Let $L$ be a quadratic extension of $K$.  Then  
\begin{enumerate}
\item[(i)] If the class number of $K$ is $1$, then any non-elementary arithmetic  progression of five squares properly defined over $L$ is, up to equivalence, of the form {$\left(a_1^2,a_2^2,a_3^2,\alpha \, a_4^2,a_5^2\right)$, where $a_1,a_2,a_3$, $a_4,a_5,\alpha \in K$} and $\alpha$ is non-square.
\item[(ii)] If $K\ne \Q$ and \( E_0(K) = E_0(\Q) \), then there does not exist any non-elementary arithmetic progression of five squares properly defined over \( L \).
\end{enumerate}
\end{thm}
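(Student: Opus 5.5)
Following Mordell's approach (as in the joint work with Tho), I will parametrize arithmetic progressions of five squares as points on a genus $5$ curve $C$ that is an intersection of quadrics in $\mathbb{P}^4$. Write the progression as $(x_1^2,\dots,x_5^2)$. The four relations $x_{i-1}^2-2x_i^2+x_{i+1}^2=0$ span a space of dimension three, and $C$ is the curve they cut out. A progression properly defined over $L=K(\sqrt{d})$ then gives a $K$-quadratic point on $C$, that is, a point defined over $L$ and not over $K$. Let $\sigma$ be the nontrivial automorphism of $L/K$. Since $P$ and $P^\sigma$ both lie on $C$, the point $P$ lies on the quotient constructions attached to $C$.

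The plan is to use the group $(\Z/2)^4$ of sign changes $x_i\mapsto \pm x_i$. Quotients of $C$ by subgroups of this group give genus $1$ curves, and over $\Q$ these have Jacobians $E_0$, $E_1^{\pm1}$, $E_4$ and $E_6^{\pm1}$ (and their twists). The key standard trick is this: if $P$ is a $K$-quadratic point, then in the projective coordinates each $x_i$ (after scaling) lies in $K$ or in $\sqrt{d}K$, or more generally $P^\sigma=\tau(P)$ for some automorphism $\tau$ of $C$. I will verify this by examining the divisor $P+P^\sigma$. Since $C$ is a genus $5$ curve, if $C$ is not hyperelliptic and not bielliptic in the relevant sense, then every quadratic point maps to a $K$-rational point of some elliptic quotient $C\to C/\langle\tau\rangle$, or to a rational point of the symmetric square that comes from a $g^1_2$.

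So I will split into cases according to which sign-change involution $\tau$ satisfies $P^\sigma=\tau(P)$. Each case forces a pattern in which certain coordinates lie in $K$ and the rest in $\sqrt{d}K$. The image of $P$ is then a $K$-point on the elliptic curve $C/\langle\tau\rangle$, which is isomorphic over $K$ (possibly after a twist by $d$ absorbed by symmetry) to one of $E_1^{\pm1}$, $E_4$, $E_6^{\pm1}$ or $E_0$.

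Under $\texttt{cond}_A(K)$ these $K$-points are the $\Q$-points, which are finite and known. Pulling them back gives only elementary progressions or the exceptional family in which a single coordinate lies in $\sqrt{d}K$. That is exactly $x_4^2=\alpha a_4^2$ with $\alpha=d$, which gives the form in (i). Under $\texttt{cond}_B(K)$, the rank-zero hypothesis on $E_1^{\pm1}(K)$ reduces the problem to torsion. The torsion of these CM-type curves over fields of degree $<8$, or of odd degree, can be enumerated via known torsion classification. The excluded field $\Q(\zeta_{12})$ is precisely where extra torsion appears. The class number one hypothesis in (i) is used to normalize scalings: a projective point over $L$ can be rescaled so that the coordinates generate ideals coprime in $\mathcal{O}_K$, which lets me conclude that $x_i\in K$ or $x_i\in\sqrt{\alpha}K$.

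For (ii), the remaining family $(a_1^2,a_2^2,a_3^2,\alpha a_4^2,a_5^2)$ corresponds to $K$-points on a quotient with Jacobian $E_0$. Note that $a_1^2,a_2^2,a_3^2$, $a_5^2$ with $a_1^2+a_5^2=2a_3^2$ is one such configuration. If $E_0(K)=E_0(\Q)$, these points are rational, so the progression is defined over $\Q$ up to scaling. Since $K\neq\Q$ it is then not properly defined over $L$, or it is elementary.

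I expect the main obstacle to be the middle step: showing that every $K$-quadratic point of $C$ comes from a $K$-point on one of the elliptic quotients. This needs a careful analysis of the Galois-conjugate pair and of the $W$-type maps to $\mathrm{Sym}^2 C$. I will first try to use the fact that $C$ maps to a product of these elliptic curves with an isogenous Jacobian, $J(C)\sim E_0\times E_1\times E_1^{-1}\times E_6\times E_6^{-1}$ up to the $E_4$ factor.
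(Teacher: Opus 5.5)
There is a genuine gap, and it is the step you flag yourself. What you call the ``key standard trick'' is that a $K$-quadratic point $P\in\mathcal C(L)$ satisfies $P^\sigma=\tau(P)$ for some automorphism $\tau$, so that after scaling every coordinate lies in $K$ or in $\sqrt d\,K$. This is not a general principle. For this curve it is essentially the content of Theorem~\ref{quad_points}, and you give no argument for it.

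The justification you sketch is inverted. $\mathcal C$ \emph{is} bielliptic: for example $\varphi_0\colon(t,X_1,X_3)\mapsto(t,X_1)$ is the degree-$2$ quotient by $X_3\mapsto -X_3$ onto $C_0$. A non-bielliptic curve would have no degree-$2$ elliptic quotient to map to. What general theory (Harris--Silverman, or Faltings applied to the image of $\mathrm{Sym}^2\mathcal C$ in the Jacobian) actually gives is this: all but finitely many quadratic points are pullbacks of $K$-points under degree-$2$ maps to $\mathbb P^1$ or to positive-rank elliptic curves. It says nothing about what the finitely many remaining points are. Determining them is exactly what the theorem requires, and such points do exist, e.g.\ $\mathcal P_{\mathcal T}$ over $\Q(\zeta_{12})$.

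Without $P^\sigma=\tau(P)$, composing with a quotient map only gives $\varphi_i(P)\in C_i(L)$, not a point of $C_i(K)$. So your reduction to $\texttt{cond}_A(K)$ or $\texttt{cond}_B(K)$ never starts. Working with the class of $P+P^\sigma$ in $J(\mathcal C)(K)$ does not help by itself either: $E_0$ is an isogeny factor of positive rank already over $\Q$, so that group is infinite.

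The missing idea is to use the $K$-rational degree-$2$ divisor $\varphi_i(P)+\varphi_i(P)^\sigma$ on each genus-one quotient $C_i\colon y^2=f(x)$, rather than the point $\varphi_i(P)$. Suppose the $x$-coordinate $r$ of $\varphi_i(P)$ is quadratic over $K$. Then $y=\alpha+\beta r$ with $\alpha,\beta\in K$, and $f(x)-(\alpha+\beta x)^2=Q(x)(x-r_i)$, where $Q$ is the minimal polynomial of $r$ and $(r_i,\alpha+\beta r_i)\in C_i(K)$. The Jacobians of $C_1,\dots,C_6$ are $E_1^{\pm1},E_4,E_6^{\pm1}$, whose $K$-points are finite and explicitly known under $\texttt{cond}_A(K)$ or $\texttt{cond}_B(K)$. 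Hence only finitely many pencils remain, and they are checked directly.

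The paper runs this along the tower of invariants $r=(t-1/t)^2$, $s=t-1/t$, $t$:
\begin{itemize}
\item $C_1,C_2$ give $r\in K$;
\item $C_1,C_3$ give $s\in K$;
\item $C_4,C_5$ give $t\in K$;
\item $C_6$ then excludes $X_1,X_3$ both in $K$ or both in $\sqrt d\,K$.
\end{itemize}
This leaves $(t,X_1)\in C_0(K)$ or $(-t,X_3)\in C_0(K)$. Only at this point does $E_0$ enter, which is why (i) needs no hypothesis on $E_0$ while (ii) needs $E_0(K)=E_0(\Q)$. Given this structural result, your argument for (ii) agrees with the paper's.

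Some smaller points:
\begin{itemize}
\item These curves are not CM ($j(E_1)=2048/3$); the odd-degree non-growth result is stated precisely for non-CM curves.
\item Under $\texttt{cond}_A(K)$ nothing is assumed about $E_0$, and $C_0(\Q)$ is infinite, so ``finite and known'' is false for that quotient.
\item Class number one is not what puts coordinates in $K$ or $\sqrt d\,K$. If $P^\sigma=\tau(P)$, normalizing a suitable coordinate to $1$ already does that over any $K$.
\end{itemize}
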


The previous result concerns number fields different from $\Q(\zeta_{12})$. The next result is devoted to the specific case $ \Q(\zeta_{12})$. Note that in this case, condition $\texttt{cond}_B(\Q(\zeta_{12}))$ is satisfied.
 
\begin{thm}\label{Qz12}
The only non-elementary arithmetic progression of five squares properly defined over a quadratic extension of $\Q(\zeta_{12})$, up to equivalence, is $(-3, -1, 1, 3, 5)$, which is defined over $\Q(\zeta_{12}, \sqrt{5})$.
\end{thm}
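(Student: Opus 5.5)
We reduce the statement to the determination of quadratic points on the genus $5$ curve that parametrizes five squares in arithmetic progression, and then do a base change to $K=\Q(\zeta_{12})$. Following Mordell, a five-term progression $(x_0^2,\dots,x_4^2)$ with $x_{i}^2 = x_0^2+ i\,r$ is a point on the intersection of quadrics
\[
C:\quad x_0^2+x_2^2=2x_1^2,\quad x_1^2+x_3^2=2x_2^2,\quad x_2^2+x_4^2=2x_3^2
\]
in $\mathbb{P}^4$, which is the genus $5$ curve. Among its quotients by the subgroups of sign changes $x_i\mapsto -x_i$ are the elliptic curves $E_0,E_1,E_4,E_6$ and their twists. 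A non-elementary progression properly defined over a quadratic extension $L=K(\sqrt{d})$ is a $K$-quadratic point $P\in C(L)\setminus C(K)$. Let $\sigma$ be the conjugation of $L/K$. We will use the earlier reduction showing that $P^\sigma=\iota(P)$ for some sign-change involution $\iota$ whenever the relevant elliptic quotients have no unexpected $K$-points; the case $P^\sigma$ unrelated to $P$ forces a $K$-rational point on a quotient by a larger group.

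First, we verify $\texttt{cond}_B(\Q(\zeta_{12}))$. We compute the ranks of $E_1^{\pm1}$ over $\Q(\zeta_{12})=\Q(i,\sqrt3)$ by decomposing over the three quadratic subfields. Indeed, $\operatorname{rank}E(\Q(i,\sqrt3))=\operatorname{rank}E(\Q)+\operatorname{rank}E^{-1}(\Q)+\operatorname{rank}E^{3}(\Q)+\operatorname{rank}E^{-3}(\Q)$. Each of these ranks is $0$ and can be checked by $2$-descent, and $E_1^{-1}\cong E_1$ over $\Q(i)$. We then follow the proof of Theorem~\ref{main}. For each involution $\iota$, the fixed data give a point $P$ with $x_j\in K$ for $j$ not flipped by $\iota$ and $x_j\in\sqrt d\,K$ otherwise. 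Rescaling produces a $K$-point on a twist $C^{(d)}_\iota$, and its image in the appropriate elliptic quotient ($E_1^{\pm1}$ or a twist) lies in the finite group $E_1^{\pm1}(K)$. The pattern $(a_1^2,a_2^2,a_3^2,\alpha a_4^2,a_5^2)$ from part (i) is the one case that is not ruled out, and there $\alpha$ becomes $d$. For that pattern we reduce to finitely many candidates for $d$ coming from the torsion points of $E_1^{\pm1}(K)$.

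Second, we enumerate. We compute $E_1^{\pm1}(\Q(\zeta_{12}))_{\mathrm{tors}}$; this is where $K=\Q(\zeta_{12})$ differs from the generic case, since extra torsion appears from $\sqrt{-1},\sqrt3$. This extra torsion is precisely why $\Q(\zeta_{12})$ was excluded in Theorem~\ref{main}. We pull each torsion point back to $C$, obtaining explicit $x_i$, and read off $d$ modulo squares of $K$. We expect the candidate to be $(-3,-1,1,3,5)$: the terms $-3,-1,1,3$ are squares in $\Q(\zeta_{12})$ (since $i,\sqrt3,\sqrt{-3}\in K$), and $5$ requires $\sqrt5$. We also check that the remaining torsion points give only elementary progressions or points already defined over $K$. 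Since $S(4)$-type considerations are not needed, the equivalence under scaling and reversal handles the symmetric solutions $(5,3,1,-1,-3)$, and their scalings by $-1$, which is $i^2$.

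The main obstacle is the careful torsion computation over $\Q(\zeta_{12})$ and the check that every extra torsion point either yields $(-3,-1,1,3,5)$ or is degenerate. A second difficulty is to make sure that no other involution case survives when $\texttt{cond}_A$ fails, which it does here. For this we would rely on $\texttt{cond}_B$ together with explicit rank-$0$ verifications for the other quotient curves ($E_4$, $E_6^{\pm1}$) over $\Q(\zeta_{12})$, done again via the twists over $\Q$.
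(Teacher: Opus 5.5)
There is a genuine gap; in fact there are two.

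First, the reduction you invoke is not an earlier result you can quote. That reduction says every quadratic point satisfies $P^\sigma=\iota(P)$ for a sign change $\iota$. Its counterpart in the paper is Proposition~\ref{prop}, whose proof is the chain $r\in K\Rightarrow s\in K\Rightarrow t\in K\Rightarrow$ final step. Each step is an explicit computation with the finite sets $C_1(K),\dots,C_6(K)$. The hypothesis you attach to the reduction (``no unexpected $K$-points'' on the quotients) fails over $\Q(\zeta_{12})$, where these curves acquire the extra points of Table~\ref{pointscyclo}. Redoing the $r\in K$ step with the extra points of $C_1(K)$ and $C_2(K)$ is exactly what produces the exceptional parameters $t\in\mathcal T$. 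These extra points come from $E_1^{\pm1}(\Q(\zeta_{12}))_{\mathrm{tors}}\simeq\Z/2\Z\times\Z/8\Z$. For $t\in\mathcal T$ even $r\notin K$, and these parameters give the points $\mathcal P_{\mathcal T}$ responsible for $(-3,-1,1,3,5)$. Your remark that a point with $P^\sigma$ unrelated to $P$ ``forces a $K$-rational point on a quotient by a larger group'' cannot replace this computation.

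Second, your case analysis looks for the answer in the wrong place.
\begin{itemize}
\item In $\Q(\zeta_{12})$ the numbers $-3,-1,1,3$ are squares and $5$ is not, so the non-square term of $(-3,-1,1,3,5)$ sits at an end. Rescaling by $K^*$ preserves the partition of positions into square classes. Hence this progression is not of the shape $(a_1^2,a_2^2,a_3^2,\alpha a_4^2,a_5^2)$, even after reversal.
\item In your language it is the case where $\iota$ flips $x_0$ or $x_4$. You declare that case ruled out, while the case you keep cannot contain it.
\item Conversely, the shape of Theorem~\ref{main}(i) is not governed by torsion of $E_1^{\pm1}$. It comes from $(t,X_1)\in C_0(K)$ or $(-t,X_3)\in C_0(K)$, and $C_0\cong E_0$ has positive rank. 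So this shape already occurs over infinitely many quadratic fields $\Q(\sqrt m)$, and no finite list of candidates $d$ can come out of torsion.
\item What eliminates these progressions over $\Q(\zeta_{12})$ is the equality $E_0(\Q(\zeta_{12}))=E_0(\Q)$. It forces them to be defined over some $\Q(\sqrt m)\subsetneq L$. Your proposal never uses $E_0$.
\end{itemize}

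Note that Theorem~\ref{main} explicitly excludes $K=\Q(\zeta_{12})$, so ``following its proof'' is exactly what breaks here. Within your twisting framework, the endpoint-flip case could be handled as follows. The four unflipped terms form a four-term progression of squares over $K$. That is a genus one curve isomorphic to $E_6$, which is finite over $\Q(\zeta_{12})$ and contains $(-3,-1,1,3)$. The other flip patterns would still need separate treatment.
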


Finally, under the assumptions of the previous results, we establish the case of non-elementary arithmetic progressions of length greater than $5$.

\begin{thm}\label{six}
Let \( K \) be a number field, and suppose that either \( \texttt{cond}_A(K) \) or \( \texttt{cond}_B(K) \) holds. Let \( L \) be a quadratic extension of \( K \). Assume that either \( E_0(K) = E_0(\mathbb{Q}) \), or, if \( E_0(K) \neq E_0(\mathbb{Q}) \), the class number of \( K \) is \( 1 \). 
{ Then, up to equivalence, the only non-elementary arithmetic progression of squares of length greater than $5$ is $(-5,-3,-1,1,3,5)$, where $K = \mathbb{Q}(\zeta_{12})$ and $L = K(\sqrt{5})$.
}
\end{thm}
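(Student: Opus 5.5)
Any non-elementary progression of length $n\ge 6$ over $L$ contains two overlapping sub-progressions of five consecutive squares, $(a_1^2,\dots,a_5^2)$ and $(a_2^2,\dots,a_6^2)$. Neither sub-progression is elementary: a constant one would force the common difference to vanish, hence the whole progression to be constant up to equivalence, and a zero term is excluded by hypothesis. So I will reduce Theorem~\ref{six} to the length-$5$ classification in Theorems~\ref{main} and~\ref{Qz12}.

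One subtlety is that a length-$5$ sub-progression need not be properly defined over $L$; it could live over $K$ or over some intermediate field. I handle this by applying the results with the appropriate base field. If all $a_i$ lie in $K$, I use that $K$ satisfies the conditions. Here the known bound over $K$ comes in: by Xarles/Bremner--Siksek type results, or by the rank-zero conditions making $K$-points of the relevant curves rational, the only progressions are rational ones. Over $\Q$, Fermat--Euler rules out even four squares. If some sub-progression uses an element of $L\setminus K$, it is properly defined over $L$ or over a quadratic extension of a smaller field, and the theorems apply.

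\textbf{Case $K\neq\Q(\zeta_{12})$.} If $E_0(K)=E_0(\Q)$ and $K\ne\Q$, Theorem~\ref{main}(ii) forbids every non-elementary five-term progression properly over $L$. The only remaining way out is that all terms lie in $K$, which the rank conditions exclude as above. If $K=\Q$, the progression lives over a quadratic field, and Xarles' result $S(2)=6$ excludes length $6$.

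Otherwise the class number is $1$, and by Theorem~\ref{main}(i) each five-window has, up to equivalence, exactly one entry that is a non-square in $K$ times a $K$-square. Scaling the whole six-term progression by a fixed $s\in L^*$ puts the first window in the form $(a_1^2,a_2^2,a_3^2,\alpha a_4^2,a_5^2)$, possibly reversed. The second window, under the same normalization, must also have exactly one non-$K$-square entry, up to an overall scalar. Its entries are $a_2^2,a_3^2,\alpha a_4^2,a_5^2,a_6^2$. If the scalar is trivial, the lone exceptional entry must be $\alpha a_4^2$, which is the fourth term of the first window and the third term of the second. That is not position $4$ of either orientation of the second window, a contradiction. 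If the scalar is non-trivial, say $\beta$, then $a_2^2,a_3^2,a_5^2$ would have to be in $\beta K^{*2}$ while they lie in $K^{*2}$, which forces at most one of them to be nonzero, again a contradiction. Reversal is handled the same way.

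\textbf{Case $K=\Q(\zeta_{12})$.} By Theorem~\ref{Qz12}, both windows are equivalent to $(-3,-1,1,3,5)$, so the six-term progression is an extension of this one by one term on either side. This gives $(-5,-3,-1,1,3,5)$ or $(-3,-1,1,3,5,7)$. The second would force its window $(-1,1,3,5,7)$ to be equivalent to $(-3,-1,1,3,5)$, which it is not, since the ratios differ. The first works because $-5=5\cdot(-1)$ is a square in $K(\sqrt5)$. For length $7$, the windows $(-5,\dots,5)$ and $(-3,\dots,7)$ again conflict.

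The step I expect to be the main obstacle is the careful positional bookkeeping of the exceptional entry $\alpha a_4^2$ across overlapping windows, including reversals and the possible subfield-definedness of a window.
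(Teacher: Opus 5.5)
Your overlap device is sound, and it is not what the paper does. The paper argues from a single window and deduces $\alpha\in K^2$ from $sa_1^2=x_1^2$ and $sa_4^2=\alpha x_4^2$. Since $a_1,a_4\in L$, this only gives $\alpha\in L^{*2}$, and it never uses the sixth term. Your observation does use it: the odd square class sits at global index $2$ or $4$ for $(a_1^2,\dots,a_5^2)$, but at $3$ or $5$ for $(a_2^2,\dots,a_6^2)$. That is what rules out length six.

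The gap is that this needs the structure for \emph{both} windows. Theorems~\ref{main} and~\ref{Qz12} only describe progressions properly defined over $L$, and your handling of the remaining windows does not hold up.
\begin{itemize}
\item In the case $E_0(K)=E_0(\Q)$, $K\neq\Q$, the claim that ``the only remaining way out is that all terms lie in $K$'' is false. By Theorem~\ref{quad_points}(ii), each window is, up to equivalence, a progression from $\mathcal S_1^*(\Q)\cup\mathcal S_3^*(\Q)$ defined over some $\Q(\sqrt m)$. Such a field may lie in $L$ without lying in $K$; for example, $(7^2,13^2,17^2,409,23^2)$ sits inside $L=K(\sqrt{409})$. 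Your treatment of this case does not exclude a six-term progression whose two windows are of this kind, possibly with different $m$. Xarles' theorem does not help either, since the six terms need not lie in a single quadratic field.
\item The general claim that such a window is ``properly defined over a quadratic extension of a smaller field, and the theorems apply'' is unsupported. A proper subfield $F\subset L$ with $F\not\subset K$ need not be quadratic over $F\cap K$. Class number one does not pass to subfields. A non-square of $F\cap K$ may become a square in $K$, so the positional bookkeeping would have to be redone modulo $K^{*2}$ anyway.
\item The same omission affects $K=\Q(\zeta_{12})$: windows coming from $\mathcal S_1^*(\Q)\cup\mathcal S_3^*(\Q)$ are not equivalent to $(-3,-1,1,3,5)$, so ``both windows are equivalent to $(-3,-1,1,3,5)$'' does not cover them.
\item A smaller point: Xarles and Bremner--Siksek say nothing about five squares over a field of arbitrary degree. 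Non-trivial points of $\mathcal C(K)$ are excluded by the $C_6(K)$ analysis in the final step of the proof of Proposition~\ref{prop}.
\end{itemize}

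The fix is to bypass properness altogether and apply Proposition~\ref{prop}, together with its final step, to each window as a non-trivial point of $\mathcal C(L)$.
\begin{itemize}
\item For $t\notin\mathcal T$ this gives $t\in K$.
\item After normalizing $X_0,X_2,X_4\in K$, exactly one of $X_1,X_3$ lies in $K^*$ and the other lies in $\alpha K^*$, where $L=K(\alpha)$. This is because the cases $\gamma_1=\gamma_3=0$ and $\delta_1=\delta_3=0$ are ruled out there.
\item So every such window, properly defined or not, has four entries in one class of $K^*/K^{*2}$ and the entry in window position $2$ or $4$ in a different class.
\item Your positional argument then gives the contradiction uniformly, without using the class number or the hypothesis on $E_0$.
\end{itemize}
What remains are windows with $t\in\mathcal T$. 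By the proof of Theorem~\ref{quad_points} these only arise for $K=\Q(\zeta_{12})$, and they are exactly the ones your extension of $(-3,-1,1,3,5)$ treats.
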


This article is organized as follows. In Section~\ref{sec_setting}, given a number field $K$, we describe arithmetic progressions of five squares as $K$-rational points on a genus $5$ curve $\mathcal{C}$. In particular, we prove Theorem~\ref{quad_points}, which provides a characterization of quadratic points on $\mathcal{C}$ over a number field $K$ under certain hypotheses. The proof of this theorem relies on Proposition~\ref{prop}, whose proof is given in Section~\ref{sec_proof_prop}. In Section~\ref{sec_growth}, we prove Proposition~\ref{propGrowth}, where we show that if $E$ is an elliptic curve defined over $\mathbb{Q}$ without complex multiplication, {having at least one rational $2$-torsion point}, and if $2$ is the unique exceptional prime, then the torsion subgroup of $E$ does not grow over any number field of odd degree. This result is of independent interest beyond the study of arithmetic progressions of squares, but it will play a key role in the cases of number field of odd degree. Using the characterization provided in Theorem~\ref{quad_points}, we prove Theorem~\ref{main} in Section~\ref{sec_proof_main} and in Section \ref{sec_proof_Qz12} for the specific case where $K=\Q(\zeta_{12})$. Section~\ref{sec_proof_six} is devoted to the proof of Theorem~\ref{six}. Finally, in the Appendix, we collect the relevant information on certain genus one curves needed in the proofs throughout the article.

This work extensively utilizes the computer algebra system \verb|Magma| \cite{magma}. The code verifying the computational claims presented in this article is available at \cite{repository}.

\section{Quadratic points on a genus $5$ curve over number fields}\label{sec_setting}
{
Let $L$ be a number field, and let $a_1,a_2,a_3,a_4,a_5 \in L$, not all zero. Then $a_1^2$, $a_2^2$, $a_3^2$, $a_4^2$, and $a_5^2$ are in arithmetic progression if and only if $a_2^2 - a_1^2 = a_3^2 - a_2^2, a_3^2 - a_2^2 = a_4^2 - a_3^2$, and $a_4^2 - a_3^2 = a_5^2 - a_4^2$. In this case, $[a_1:a_2:a_3:a_4:a_5]\in \mathcal C(L) \subset \mathbb{P}^4(L)$, where $\mathcal C$ is the genus $5$ curve defined by the system of equations
\[
\mathcal C :
\begin{cases}
X_0^{2}+X_4^{2}=2 X_2^{2},\\[2mm]
X_0^{2}+X_2^{2}=2 X_1^{2},\\[2mm]
X_2^{2}+X_4^{2}=2 X_3^{2}.
\end{cases}
\]}
The first equation of $\mathcal C$  can be parametrized over $L$; that is, there exists \( t \in L \) such that
\begin{equation}\label{eqPara}
[X_0: X_2: X_4] = \left[\pm(t^{2}-2 t-1): \pm (t^{2}+1): \pm (t^{2}+2 t-1)\right].
\end{equation}
Thus, $\mathcal{C}$ can be described  as  
\begin{equation}\label{eq2C}
\mathcal{C} \,:\,  
\begin{cases}
X_1^{2} = G(t),\\[1mm]
X_3^{2} = G(-t),  
\end{cases}
\end{equation}
where $G(x) = x^{4} - 2x^{3} + 2x^{2} + 2x + 1$.  Note that the genus one curve \(C_0: y^2 = G(x)\) admits two morphisms \(\varphi_0, \varphi'_0: \mathcal C \longrightarrow C_0\) defined by \(\varphi_0(t, X_1, X_3) = (t, X_1)\) and \(\varphi'_0(t, X_1, X_3) = (-t, X_3)\). In other words, a point \( (t, X_1, X_3) \in \mathcal{C}(L) \) if and only if \( (t, X_1), (-t, X_3) \in C_0(L) \).

If $t\in\{0,\pm 1\}$, we obtain the points $[\pm 1:\pm 1: \pm 1 :\pm 1 : \pm 1] \in \mathcal{C}$.  We say that a point $[X_0:X_1:X_2:X_3:X_4] \in \mathcal{C}$ is \emph{trivial} if it corresponds to $t \in \{0, \pm 1\}$ or if $X_j = 0$ for some $j \in \{0, \dots, 4\}$. The following table lists these points along with their corresponding values of $t$:

\begin{longtblr}
[caption = {Trivial points in $\mathcal C$}, label=trivial_points]
{cells = {mode=imath},hlines,vlines,colspec  = ccc}
 t & [X_0:X_1:X_2:X_3:X_4]  & \\
0,\pm 1 & [\pm 1:\pm 1: \pm 1 :\pm 1 : \pm 1] & \\
1\pm \sqrt{2} & [0:\pm 1:\pm \sqrt{2}:\pm \sqrt{3}:\pm 2]  & X_0=0\\
\displaystyle \left(\frac{1\pm \sqrt{-1}}{2}\right)(1\pm\sqrt{3}) & [ \pm  \sqrt{-1}: 0: \pm 1: \pm \sqrt{2}: \pm \sqrt{3}]  & X_1=0\\
\pm \sqrt{-1}& [ \pm  \sqrt{-2}: \pm  \sqrt{-1}: 0: \pm 1: \pm \sqrt{2}] & X_2=0 \\
\displaystyle \left(\frac{-1\mp \sqrt{-1}}{2}\right)(1\pm\sqrt{3}) & [ \pm \sqrt{3}: \pm \sqrt{2} : \pm 1 : 0 :\pm  \sqrt{-1}]  & X_3=0\\
-1\pm \sqrt{2} & [\pm 2:\pm \sqrt{3} : \pm \sqrt{2} : \pm  1 : 0] & X_4=0\\
\end{longtblr}      

Other remarkable points in $\mathcal{C}$ are the following:
\newpage 
\begin{longtblr}
[caption = {Other points in $\mathcal C$}, label=otherpoints]
{cells = {mode=imath},hlines,vlines,colspec  = cc}
 t & [X_0:X_1:X_2:X_3:X_4]   \\
 \frac{1}{4}(1\pm \sqrt{5})(1+\sqrt{-3}) & [\pm \sqrt{3}:\pm 1: \pm \sqrt{-1} :\pm \sqrt{-3} : \pm \sqrt{-5}]  \\
 \frac{-1}{4}(1\pm \sqrt{5})(1+\sqrt{-3}) & [\pm \sqrt{-5} : \pm \sqrt{-3}: \pm \sqrt{-1}:\pm 1: \pm \sqrt{3}] \\
 \frac{1}{4}(1\pm \sqrt{5})(1-\sqrt{-3}) & [\pm \sqrt{-3}: \pm \sqrt{-1} :\pm 1 :\pm \sqrt{3} : \pm \sqrt{5}]  \\
 \frac{-1}{4}(1\pm \sqrt{5})(1-\sqrt{-3}) &  [ \pm \sqrt{5} :\pm \sqrt{3} :\pm 1: \pm \sqrt{-1}: \pm \sqrt{-3}]
\end{longtblr}

Now, assume $t\not\in\{0,\pm 1\}$ and define $s = s(t) = t - \frac{1}{t}$. From \eqref{eq2C}, we obtain  
\begin{equation}\label{eqGundert2}
\frac{G(\pm t)}{t^2} = t^{2}+\frac{1}{t^{2}}\mp 2\left(t-\frac{1}{t}\right)+2 = s^2\mp 2 s+4 \in L^{2}.
\end{equation}
On the other hand, from the definition of $s$, we also have  
$$
s^{2}+4=\left(t+\frac{1}{t}\right)^{2} \in L^{2}.
$$
This allows us to deduce that  
\[
\begin{cases}
(s^{2}+4)(s^{2}-2s+4)(s^{2}+2s+4)=\left(t+\frac{1}{t}\right)^{2} \frac{X_1^2}{t^2}\frac{X_3^2}{t^2} \in L^{2},\\[1mm]  
s^{2}(s^{2}-2s+4)(s^{2}+2s+4)=\left(t - \frac{1}{t}\right)^{2} \frac{X_1^2}{t^2}\frac{X_3^2}{t^2} \in L^{2}.
\end{cases}
\]
Defining $r = r(t) = s^{2}$, the previous equations become  
$$
\begin{cases}
(r+4)(r^{2}+4r+16) \in L^{2}, \\  
r(r^{2}+4r+16) \in L^{2}.
\end{cases}
$$

The previous constructions allow us to obtain six genus one curves \( C_i \), \( i=1,\dots,6 \), together with the corresponding morphisms \( \varphi_i: \mathcal{C} \longrightarrow C_i \), which are shown in Table~\ref{TableC}.

\begin{longtblr}
[caption = {Curves and morphisms}, label=TableC]
{cells = {mode=imath},hlines,vlines,colspec  = clc,rowhead=1}
C_i & \text{Equation} & \varphi_i(t,X_1,X_3) \\
C_1 & y^2 = (x+4)(x^2+4x+16)  & \displaystyle \left(r,\left(t^{-1}+t^{-3}\right)X_1X_3\right)  \\
C_2 & y^2 = x(x^2+4x+16)  &  \displaystyle \left(r,\left(t^{-1}-t^{-3}\right)X_1X_3\right)  \\
C_3 & y^2 = x(x+4)(x^2+4x+16) & \displaystyle \left(r,\left(1-t^{-4}\right)X_1X_3\right)  \\
 C_4 & y^2 = x(x^2+14x+1) &  (t^4,t^2X_1 X_3)\\
 C_5 & y^2 = x^4+4x^2+16 & (s,t^{-2}X_1 X_3)\\
 C_6  & y^2 = x^4+14x^2+1  & (t^2, X_1 X_3) \\
\end{longtblr}      

In this article we  determine the set of $K$-quadratic points on $\mathcal C$, given by 
$$
\Gamma_2(\mathcal C,K)=\bigcup\{\mathcal C(L) \,:\,	K\subset  L\subset \overline{\Q}\,\,\mbox{and}\,\,[L:K]=2\},
$$
under certain hypotheses on the field \( K \).

Let \( F \) be a number field of class number \( 1 \). If \( (t,u) \in C_0(F) \), we define
\[
\mathcal{P}_1(t,u) := \left\{ [\pm(t^{2}-2t-1) : \pm u : \pm(t^{2}+1) : \pm v \sqrt{m} : \pm(t^{2}+2t-1)] \right\}\subset \mathbb P^4(F),
\]
where $G(-t)=m\, v^2$; and if \( (-t,v) \in C_0(F) \), we define
\[
\mathcal{P}_3(-t,v) := \left\{ [\pm(t^{2}+2t-1) : \pm u \sqrt{n} : \pm(t^{2}+1) : \pm v : \pm(t^{2}-2t-1)] \right\}\subset \mathbb P^4(F),
\]
where $G(t)=n\, u^2$. We then define the sets
\[
\mathcal{S}_1(F) := \!\!\!\!\!\!\bigcup_{(t,u) \in C_0(F)}\!\!\!\!\!\! \mathcal{P}_1(t,u)
\quad \text{and} \quad
\mathcal{S}_3(F) := \!\!\!\!\!\!\bigcup_{(-t,v) \in C_0(F)}\!\!\!\!\!\! \mathcal{P}_3(-t,v).
\]
For clarity of exposition, we shall denote by \( \Gamma_2^*(\mathcal{C},F) \), \( \mathcal{S}_1^*(F) \), and \( \mathcal{S}_3^*(F) \) the sets \( \Gamma_2(\mathcal{C},F) \), \( \mathcal{S}_1(F) \), and \( \mathcal{S}_3(F) \), respectively, after removing the trivial points listed in Table~\ref{trivial_points}. We define the sets $\mathcal{T}$ and $\mathcal{P}_{\mathcal{T}}$, which arise from Table~\ref{otherpoints}. The set
\[
\mathcal{T} := \{\pm \frac{1}{4}(1 \pm \sqrt{5})(1 \pm \sqrt{-3})\}
\]
corresponds to the first column of Table~\ref{otherpoints}. On the other hand, the set
\[
\mathcal{P}_{\mathcal T}:=\left\{
\begin{array}{c}
[\pm \sqrt{3} : \pm 1 : \pm \sqrt{-1} : \pm \sqrt{-3} : \pm \sqrt{-5}],\quad [\pm \sqrt{-5} : \pm \sqrt{-3} : \pm \sqrt{-1} : \pm 1 : \pm \sqrt{3}],
\\[1mm]
[\pm \sqrt{-3} : \pm \sqrt{-1} : \pm 1 : \pm \sqrt{3} : \pm \sqrt{5}],\quad [\pm \sqrt{5} : \pm \sqrt{3} : \pm 1 : \pm \sqrt{-1} : \pm \sqrt{-3}]
\end{array}
\right\}.
\]
consists of the non-trivial points on the curve $\mathcal{C}$ over $\overline{\mathbb{Q}}$ that appear in the second column of Table~\ref{otherpoints} and are associated with the values in $\mathcal{T}$.

\begin{thm}\label{quad_points}
Let \( K \) be a number field with \( K \ne \mathbb{Q}(\zeta_{12}) \). Suppose that either \( \texttt{cond}_A(K) \) or \( \texttt{cond}_B(K) \) holds. Then:
\begin{itemize}
    \item[(i)] If the class number of \( K \) is \( 1 \), then $ \Gamma^*_2(\mathcal{C},K) = \mathcal{S}^*_1(K) \cup \mathcal{S}^*_3(K)$.
    \item[(ii)] If \( E_0(K) = E_0(\mathbb{Q}) \), then $ \Gamma^*_2(\mathcal{C},K) = \mathcal{S}^*_1(\mathbb{Q}) \cup \mathcal{S}^*_3(\mathbb{Q})$.
\end{itemize}
{
In the particular case \( K = \mathbb{Q}(\zeta_{12}) \), we have $\displaystyle 
\Gamma^*_2(\mathcal{C},\mathbb{Q}(\zeta_{12})) = \mathcal{S}^*_1(\mathbb{Q}) \cup \mathcal{S}^*_3(\mathbb{Q}) \cup \mathcal{P}_{\mathcal T}.$
}
\end{thm}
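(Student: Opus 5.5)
The argument is the standard one for quadratic points on a curve that maps to several genus one curves. Let $L=K(\sqrt{d})$ be a quadratic extension and let $P=(t,X_1,X_3)\in\mathcal C(L)$ be non-trivial, so that $t\notin\{0,\pm1\}$ and no coordinate vanishes. Let $\sigma$ denote the non-trivial automorphism of $L/K$.

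\textbf{Step 1: the images of $P$ lie over $K$.} We look at the images $\varphi_i(P)\in C_i(L)$ for the curves of Table~\ref{TableC}. Each $C_i$ is an elliptic curve, or a genus one curve with an evident point, and it is isogenous or isomorphic to one of $E_1^{\pm1}$, $E_4$ or $E_6^{\pm1}$ (the data are collected in the Appendix). Over $L$ one has the decomposition
\[
C_i(L)\otimes\Q \cong C_i(K)\otimes\Q \;\oplus\; C_i^{d}(K)\otimes\Q ,
\]
and similarly for torsion up to a bounded factor. Proposition~\ref{prop} is what this step relies on: under $\texttt{cond}_A(K)$ or $\texttt{cond}_B(K)$, and with $K\neq\Q(\zeta_{12})$, it should give that $C_i(L)$ equals $C_i(K)$ together with a finite explicit set coming from torsion that grows in the quadratic extension. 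For $\texttt{cond}_B$ this rests on rank $0$ and the torsion-growth control of Proposition~\ref{propGrowth} in odd degree. We therefore expect to conclude that, apart from finitely many explicit points, the invariants $r=s^2$, $s$, $t^2$ and $t^4$ are all fixed by $\sigma$, and hence lie in $K$.

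\textbf{Step 2: $t\in K$.} Once $t^2\in K$ and $s=t-1/t\in K$, we can deduce $t\in K$: if instead $\sigma(t)=-t$, then $s$ being fixed forces $t-1/t=-t+1/t$, that is $t^2=1$, which is a trivial point. The finitely many exceptional points are exactly those with $t\in\mathcal T$, or the trivial ones. The case $K=\Q(\zeta_{12})$ is the one where the extra torsion on $C_5$ or $C_6$ actually produces the points of $\mathcal P_{\mathcal T}$. We would check directly that these points are defined over $\Q(\zeta_{12},\sqrt5)$ and that no others arise there. This part is a finite computation.

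\textbf{Step 3: from $t\in K$ to the sets $\mathcal S_1,\mathcal S_3$.} With $t\in K$ we have $X_1^2=G(t)\in K$ and $X_3^2=G(-t)\in K$, with both nonzero. Since $L$ is quadratic, $X_1$ and $X_3$ cannot both fail to lie in $K$ unless $G(t)G(-t)\in K^{*2}$. That case should be excluded because $X_1X_3$ would be $\sigma$-anti-invariant while $\varphi_6(P)=(t^2,X_1X_3)$ lies in $C_6(K)$ by Step 1. So exactly one of $(t,X_1)$ and $(-t,X_3)$ lies in $C_0(K)$. When $K$ has class number $1$, we write the other value as $m v^2$ with $m$ squarefree; this gives precisely $\mathcal P_1(t,u)$ or $\mathcal P_3(-t,v)$, proving (i). For (ii), the hypothesis $E_0(K)=E_0(\Q)$, with $C_0\cong E_0$ via a rational point at $t=0$, gives $C_0(K)=C_0(\Q)$. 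Hence $t\in\Q$ and the points lie in $\mathcal S_1(\Q)\cup\mathcal S_3(\Q)$. The reverse inclusions are immediate, because every element of $\mathcal S_j^*$ is a point of $\mathcal C$ over $K(\sqrt m)$.

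\textbf{Main obstacle.} The main obstacle is Step 1, namely controlling $C_i(L)$ for an arbitrary quadratic $L/K$ using only $K$-rational information. This requires both the twist decomposition, which in turn needs the twisted curves $E^{\pm1}$ under control, and a complete list of the torsion points that can appear over quadratic extensions. We would handle this by invoking Proposition~\ref{prop} and then checking by hand which torsion points of the $C_i$ lift to points of $\mathcal C$.
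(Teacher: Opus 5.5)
There is a genuine gap in Step 1. Proposition~\ref{prop} says nothing about $C_i(L)$. It asserts directly that a non-trivial $P\in\mathcal C(L)$ with $t\notin\mathcal T$ has $(t,X_1)\in C_0(K)$ or $(-t,X_3)\in C_0(K)$.

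You attribute to it the statement that $C_i(L)$ is $C_i(K)$ plus finitely many torsion-growth points, and this is false. By your own decomposition it would need $\operatorname{rank} C_i^{d}(K)=0$ for every $d$, whereas $\texttt{cond}_A(K)$ and $\texttt{cond}_B(K)$ only control the twists by $\pm1$. Already for $K=\Q$, which satisfies $\texttt{cond}_A$, the point $(3,9)$ on $E_1^{6}\colon y^2=x^3-6x^2+36x$ has infinite order (its double has $x=9/4$). So $C_1(\Q(\sqrt6))$ is infinite.

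More decisively, ``the images of $P$ lie over $K$'' contradicts the theorem you are proving. Take a point of $\mathcal P_1(t,u)$ with $m\notin K^2$. Then $X_1=u\in K$ and $X_3=v\sqrt m\notin K$, so $X_1X_3\notin K$. Every $\varphi_i(P)$, $i=1,\dots,6$, has $y$-coordinate a $K^*$-multiple of $X_1X_3$, hence not in $K$. Only the $x$-coordinates $r,s,t$ descend.

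The paper gets this not from knowledge of $C_i(L)$ but from a Mordell-type trick. If the $x$-coordinate generates $L$, write $y=\alpha+\beta x$ with $\alpha,\beta\in K$. The third intersection of this line with the curve is a $K$-point of the \emph{same} curve, not of a twist, so only $C_i(K)$ is ever needed. Explicit case analyses via $C_1,C_3$ (for $s$) and $C_4,C_5$ (for $t$) follow.

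Step 3 also contains a concrete error. If $X_1,X_3\notin K$, then $\sigma(X_1X_3)=(-X_1)(-X_3)=X_1X_3$. So $X_1X_3$ is $\sigma$-invariant, not anti-invariant, and $\varphi_6(P)\in C_6(K)$ is perfectly consistent with that case. In the paper, this case and the case $X_1,X_3\in K$ are exactly what the finite check of points of $C_6(K)$ with $x_0\in K^2$ rules out. Conversely, if $\varphi_6(P)\in C_6(K)$ held as Step 1 claims, it would force $X_1X_3\in K$. That would eliminate precisely the points of $\mathcal S_1^*\cup\mathcal S_3^*$.

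The repair is to use Proposition~\ref{prop} as actually stated. For $t\notin\mathcal T$ it gives your final dichotomy at once. Your closing argument (class number one for (i), $C_0(K)=C_0(\Q)$ for (ii)) then finishes as in the paper.

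You still need a separate argument for $t\in\mathcal T$. Those points force $\sqrt{-1},\sqrt3,\sqrt5\in L$, and among the possible $K$ only $\Q(\zeta_{12})$ satisfies $\texttt{cond}_A$ or $\texttt{cond}_B$. This shows $\mathcal P_{\mathcal T}$ contributes nothing when $K\neq\Q(\zeta_{12})$. When $K=\Q(\zeta_{12})$, $\mathcal P_{\mathcal T}$ lies in $\mathcal C(\Q(\zeta_{12},\sqrt5))$, and $E_0(\Q(\zeta_{12}))=E_0(\Q)$ handles the remaining points.

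A smaller inaccuracy: over $\Q(\zeta_{12})$ the exceptional values come from $C_1,C_2$ in the step $r\in K$, namely $r=\tfrac12(-15\pm\sqrt{-15})$. They do not come from extra torsion on $C_5$ or $C_6$.
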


The proof of Theorem~\ref{quad_points} relies on the following result.

\begin{prop}\label{prop}
Let \( K \) be a number field, and suppose that either \( \texttt{cond}_A(K) \) or \( \texttt{cond}_B(K) \) holds. Let \( L \) be a quadratic extension of \( K \), {and  let \( (t, X_1, X_3) \in \mathcal{C}(L) \) be a non-trivial point such that \( t \notin\mathcal T \)}. Then, either \( (t, X_1) \in C_0(K) \) or \( (-t, X_3) \in C_0(K) \).
\end{prop}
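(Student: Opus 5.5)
This is the Mordell-type argument: push a quadratic point of $\mathcal C$ down to the genus one curves of Table~\ref{TableC}, use that these curves have few points over $K$, and pull back. Let $L=K(\sqrt{d})$ with Galois conjugation $\sigma$, and let $P=(t,X_1,X_3)\in\mathcal C(L)$ be non-trivial with $t\notin\mathcal T$.

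\textbf{Step 1: identify the relevant elliptic curves.} Each $C_i$ ($i=1,\dots,6$) has an obvious rational point, so it is an elliptic curve over $\Q$. I will compute their Weierstrass models and show that, up to isomorphism, each $C_i$ and each of its twists by $\pm1$ is one of $E_1^{\pm1}$, $E_4$, $E_6^{\pm1}$. For instance, $C_4:y^2=x(x^2+14x+1)$ and $C_6$ should be related to $E_1$ or $E_6$ by $2$-isogenies. I then use this to determine $C_i(K)$.
\begin{itemize}
\item Under $\texttt{cond}_A(K)$, the hypothesis gives $C_i(K)=C_i(\Q)$, which is finite and listed in the Appendix.
\item Under $\texttt{cond}_B(K)$, rank zero means $E_1^{\pm1}(K)$ is torsion. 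The torsion is controlled by Proposition~\ref{propGrowth} when $[K:\Q]$ is odd, and by classification of torsion growth when $[K:\Q]<8$.
\end{itemize}

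\textbf{Step 2: use the $K$-rational combinations of $r$.} The functions $r=s^2$, $t^4$ and $t^2$ give maps $\mathcal C\to C_i$. I will work with the symmetric combinations $\varphi_i(P)+\varphi_i(P^\sigma)$ in $C_i(K)$, and with the traces of $r$, which are $K$-rational. Because $C_i(K)$ is finite and explicitly known, $r$ is forced to satisfy a quadratic equation over $K$ drawn from a finite list. Alternatively, I will map $P$ into the Weil restriction of $C_i$ from $L$ to $K$. This is isogenous to $C_i\times C_i^{d}$, so the $L$-points split into $K$-points of $C_i$ and of its twist, and the conditions on $E^{\pm1}$ enter exactly here.

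\textbf{Step 3: check the finitely many cases.} Running over the finitely many possibilities, each solution should fall into one of three outcomes.
\begin{itemize}
\item $t$ is trivial.
\item $t\in\mathcal T$: the values of Table~\ref{otherpoints}, which are genuine exceptions.
\item $t\in K$: then $G(t)$ and $G(-t)$ lie in $K$.
\end{itemize}
In the last case $X_1^2,X_3^2\in K$, and $L=K(X_1,X_3)$ is quadratic. I then need to show that $X_1\in K$ or $X_3\in K$. If both are non-rational, then $X_1X_3\in K$, so $(t^2,X_1X_3)\in C_6(K)$ or $(t^4,t^2X_1X_3)\in C_4(K)$. These $K$-points are finite and known, so this case yields only trivial values of $t$, which are excluded.

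\textbf{Main obstacle.} The hardest part is the subcase where $t\notin K$ but $r$ or $t^2$ lies in $K$. This needs the twisted curves $E^{-1}$ and the sign bookkeeping; for example $t^2\in K$ with $t\notin K$ gives $t=\sqrt{d}\,t_0$. I would treat it by substituting $t=\sqrt{d}\,t_0$ and reducing again to $K$-points of $C_4$ or $C_6$, or of their $-1$ twists, via $\varphi_4$ and $\varphi_6$. The $\zeta_{12}$ points and $\mathcal T$ appear in this subcase.
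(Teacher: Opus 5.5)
Your overall strategy (push $P$ down to the genus one curves $C_i$, use that $C_i(K)$ is finite and explicitly known, then check finitely many cases) is the paper's. Your last step ($t\in K$ and $X_1,X_3\notin K$ $\Rightarrow$ $X_1X_3\in K$ and $(t^2,X_1X_3)\in C_6(K)$) is exactly the paper's final step. However, the step that actually produces finiteness is missing. Knowing that $\varphi_i(P)+\varphi_i(P^\sigma)$ lies in the finite set $C_i(K)$ for a \emph{single} curve does not put $r$ in a finite list. Through each point of $C_1(K)$, and for \emph{every} slope $\beta\in K$, there is a line meeting $C_1$ in a further conjugate pair of quadratic points, so one curve only gives one-parameter families of candidates.

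The paper gets finiteness because the same function $r$ is the $x$-coordinate on two curves, $C_1$ and $C_2$. For each pair $(P_1,P_2)\in C_1(K)\times C_2(K)$, the residual quadratics $Q_{\beta_1}$ and $Q_{\beta_2}$ must both equal the minimal polynomial of $r$. This is a zero-dimensional system in $(\beta_1,\beta_2)$; matching traces alone gives one equation in two unknowns. Your Weil restriction alternative fails: $\operatorname{Res}_{L/K}C_i$ is isogenous to $C_i\times C_i^{d}$ with $d$ arbitrary, and nothing in $\texttt{cond}_A(K)$ or $\texttt{cond}_B(K)$ controls $C_i^{d}(K)$. The twists by $-1$ in the hypotheses come from $C_2\cong E_1^{-1}$ and $C_5\cong E_6^{-1}$ over $\Q$, not from descent along $L/K$.

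Your treatment of the case $r\in K$, $t\notin K$ is aimed at the wrong subcase. Since $t^2-st-1=0$ and $s^2=r\in K$, the conjugate $t^\sigma$ is $-t$, $1/t$ or $-1/t$.

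The subcase $t^\sigma=-t$ (i.e.\ $t^2\in K$), which you call the hardest, is easy. From $G(t^\sigma)=G(-t)$ you get $X_1^\sigma=\pm X_3$, so $X_1X_3=\pm N_{L/K}(X_1)\in K$ and $(t^2,X_1X_3)\in C_6(K)$ directly.

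What your plan does not cover are the other two subcases:
\begin{itemize}
\item If $t^\sigma=1/t$, then $(t^2)^\sigma=t^{-2}$ and $(t^4)^\sigma=t^{-4}$. Away from roots of unity, neither $\varphi_4$ nor $\varphi_6$ descends to $K$. The paper instead uses $t+1/t\in K$ together with $r^2+4r+16\in K^2$ to land in $C_1(K)$.
\item If $t^\sigma=-1/t$ (equivalently $s\in K$), the identities $G(-1/t)=G(t)/t^4$ and $G(1/t)=G(-t)/t^4$ give $X_j^\sigma=\pm X_j/t^2$ for $j=1,3$, with independent signs. So $t^{-2}X_1X_3$ lies either in $K$, giving a point of $C_5(K)$, or in $\sqrt d\,K$, giving a point on the $d$-twist of $C_5$, which the hypotheses do not control. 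The paper handles both at once with the third-intersection argument on $C_4$. It uses that $w=t^4$ has minimal polynomial $x^2-(s^4+4s^2+2)x+1$. This leaves finitely many $(\beta,s)$ for each point of $C_4(K)$, with $C_5(K)$ handling the residual point $(0,0)$.
\end{itemize}

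Finally, the points with $t\in\mathcal T$ have $r\notin\Q(\zeta_{12})$. They therefore arise in the $r\notin K$ step, not in the subcase you indicate.
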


\section{A Result on the Non-Growth of Torsion}\label{sec_growth}
In this section, we will prove a result concerning the torsion growth of elliptic curves under certain hypotheses. This result is of independent interest. However, it is the key piece for deriving the main results presented in the introduction in the case of number fields with odd degree.

{Let $E$ be an elliptic curve defined over $\mathbb{Q}$. Let $p$ be a prime, and let $G_E(p)$ denote the image (up to conjugacy in $\operatorname{GL}_2(\mathbb{F}_p)$) of the mod $p$ Galois representation on the $p$-torsion of $E$. We say that $p$ is \emph{exceptional} for $E$ if $G_E(p) \neq \operatorname{GL}_2(\mathbb{F}_p)$. In the case $p=2$, it is well known (cf. \cite{serre,sutherland,zywina}) that $G_E(2)$ is one of the following groups:
\[
 \operatorname{GL}_2(\mathbb{F}_2),\quad B(2), \quad \mathrm{C}_{ns}(2), \quad \text{or} \quad \mathrm{C}_s(2),
\]
where $B(2)$ (resp. $\mathrm{C}_{ns}(2)$, $\mathrm{C}_s(2)$) denotes the Borel (resp. non-split Cartan, split Cartan) subgroup of $\operatorname{GL}_2(\mathbb{F}_2)$. Note that $\mathrm{C}_s(2)$ is the trivial group.

\begin{lemma}\label{lemmaGrowth}
Let $E$ be an elliptic curve defined over $\mathbb{Q}$ without complex multiplication, and let $P \in E(\overline{\mathbb{Q}})$ be a point of order a power of $2$.  
\begin{itemize}
\item[(i)] If $G_E(2)$ is $B(2)$ or $\mathrm{C}_s(2)$, then $[\mathbb{Q}(P):\mathbb{Q}] = 2^n$ for some $n \geq 0$.
\item[(ii)] If $G_E(2)$ is $\operatorname{GL}_2(\mathbb{F}_2)$ or $\mathrm{C}_{ns}(2)$, then $[\mathbb{Q}(P):\mathbb{Q}] = 3\cdot 2^n$ for some $n \geq 0$.
\end{itemize}
\end{lemma}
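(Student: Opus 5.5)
The plan is to study the orbit of $P$ under $\operatorname{Gal}(\overline{\mathbb{Q}}/\mathbb{Q})$ through the $2$-adic Galois representation $\rho_{E,2^\infty}$. If $P$ has exact order $2^k$, then $[\mathbb{Q}(P):\mathbb{Q}]$ equals the size of the orbit of $P$ under $G:=\rho_{E,2^k}(G_{\mathbb{Q}})\subseteq \operatorname{GL}_2(\mathbb{Z}/2^k\mathbb{Z})$. The reduction map $\operatorname{GL}_2(\mathbb{Z}/2^k)\to\operatorname{GL}_2(\mathbb{F}_2)$ has kernel a $2$-group of order $2^{4(k-1)}$. Hence $|G| = |G_E(2)|\cdot 2^a$ for some $a\ge 0$. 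The orbit size $|G|/|\mathrm{Stab}(P)|$ therefore divides $|G_E(2)|\cdot 2^a$.

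For part (i), if $G_E(2)$ is $B(2)$ (order $2$) or $\mathrm{C}_s(2)$ (trivial), then $G$ is a $2$-group. So every orbit has $2$-power size, which gives $[\mathbb{Q}(P):\mathbb{Q}]=2^n$. Note that the CM hypothesis is not needed here.

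For part (ii), if $G_E(2)$ is $\operatorname{GL}_2(\mathbb{F}_2)\cong S_3$ or $\mathrm{C}_{ns}(2)\cong \mathbb{Z}/3$, then $|G| = 3\cdot 2^{a'}$, so the orbit size has the form $2^n$ or $3\cdot 2^n$. It remains to show that $3$ divides it. Consider the point $Q:=2^{k-1}P$, which has order exactly $2$. Since $\mathbb{Q}(Q)\subseteq\mathbb{Q}(P)$, the degree $[\mathbb{Q}(Q):\mathbb{Q}]$ divides $[\mathbb{Q}(P):\mathbb{Q}]$. The field $\mathbb{Q}(Q)$ is generated by a root of the cubic $f(x)$ in a model $y^2=f(x)$. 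Under either image, $G_E(2)$ acts transitively on the three nonzero $2$-torsion points, since both groups contain the $3$-cycle. So $f$ is irreducible and $[\mathbb{Q}(Q):\mathbb{Q}]=3$, which forces $3\mid[\mathbb{Q}(P):\mathbb{Q}]$. Combined with the divisibility above, this gives $[\mathbb{Q}(P):\mathbb{Q}]=3\cdot2^n$.

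The main subtlety is making sure $\mathbb{Q}(P)$ is the right field. This is the field of definition of the coordinates, and its degree is the orbit size under the full Galois group; it is not a field attached to $\pm P$. This is what lets the orbit–stabilizer count go through. The non-CM hypothesis can be carried along harmlessly. The only other point to check is that the kernel of reduction mod $2$ in $\operatorname{GL}_2(\mathbb{Z}/2^k)$ is a $2$-group, which is routine: it consists of the matrices $I+2M$, and there are $2^{4(k-1)}$ of them.
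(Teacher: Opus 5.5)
Your proof is correct, but it is organized differently from the paper's.

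The paper works with the tower $\mathbb{Q}=\mathbb{Q}(2^mP)\subseteq\mathbb{Q}(2^{m-1}P)\subseteq\cdots\subseteq\mathbb{Q}(P)$. For the bottom step $\mathbb{Q}(2^{m-1}P)/\mathbb{Q}$ it quotes a table from the literature, giving degree $1$; $1$ or $2$; or $3$ according to $G_E(2)$. For every higher step it cites a theorem (GJN1, Theorem 4.6) saying the degree lies in $\{1,2,4\}$, and then multiplies.

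You instead do a single orbit--stabilizer count in the image $G\subseteq\operatorname{GL}_2(\mathbb{Z}/2^k\mathbb{Z})$. You use only that the kernel of reduction mod $2$ is a $2$-group, so that $|G|=|G_E(2)|\cdot 2^a$. In case (ii) you then recover the factor $3$ from the subfield $\mathbb{Q}(2^{k-1}P)$, using the transitivity of $G_E(2)$ on $E[2]\setminus\{O\}$.

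The underlying mechanism is the same: everything above the mod-$2$ level contributes only powers of $2$. Your version is self-contained, avoids both citations, and shows that the non-CM hypothesis plays no role. The tower approach records the finer fact that each relative degree $[\mathbb{Q}(2^kP):\mathbb{Q}(2^{k+1}P)]$ lies in $\{1,2,4\}$, but the lemma does not need it.

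Like the paper, you tacitly assume $P\neq O$, since you use $2^{k-1}P$ where the paper takes $m\ge 1$. For $P=O$, statement (ii) would fail trivially.
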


\begin{proof}
Assume that $\mathrm{ord}(P) = 2^m$ for some $m \ge 1$.  If $m=1$ (see \cite[Table~1]{GJN1}):
\begin{equation}\label{eqp2}
[\mathbb{Q}(P):\mathbb{Q}] =
\begin{cases}
1 & \text{if } G_E(2) = \mathrm{C}_s(2),\\
1 \text{ or } 2 & \text{if } G_E(2) = B(2),\\
3 & \text{if } G_E(2) = \operatorname{GL}_2(\mathbb{F}_2) \text{ or } \mathrm{C}_{ns}(2).
\end{cases}
\end{equation}
If $m>1$, consider the tower of fields
\[
\mathbb{Q} = \mathbb{Q}(2^m P) \subseteq \mathbb{Q}(2^{m-1} P) \subseteq \cdots \subseteq \mathbb{Q}(2P) \subseteq \mathbb{Q}(P),
\]
so that
\[
[\mathbb{Q}(P):\mathbb{Q}] = \prod_{k=0}^{m-1} [\mathbb{Q}(2^k P): \mathbb{Q}(2^{k+1} P)].
\]
By~\cite[Theorem~4.6]{GJN1}, for $k \ne m-1$ we have $[\mathbb{Q}(2^k P): \mathbb{Q}(2^{k+1} P)] \in \{1,2,4\}$, and hence
\[
[\mathbb{Q}(P):\mathbb{Q}] = 2^s \cdot [\mathbb{Q}(2^{m-1} P):\mathbb{Q}]
\]
for some integer $s$. Finally, since $2^{m-1} P$ is a point of order $2$, the result follows from ~\eqref{eqp2}.
\end{proof}

\begin{prop}\label{propGrowth}
Let $E$ be an elliptic curve defined over $\mathbb{Q}$ without complex multiplication, with at least one rational $2$-torsion point, and such that $2$ is the unique exceptional prime. Then
\[
|E(\mathbb{Q})_{\mathrm{tors}}| = 2^n \quad \text{for some } n \in \{1,2,3,4\}.
\]
Moreover, if $K$ is a number field of odd degree, then $E(K)_{\mathrm{tors}} = E(\mathbb{Q})_{\mathrm{tors}}$.
\end{prop}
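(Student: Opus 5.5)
First I determine $E(\mathbb{Q})_{\mathrm{tors}}$. By Mazur's theorem, $E(\mathbb{Q})_{\mathrm{tors}}$ is either $\mathbb{Z}/N$ with $1\le N\le 10$ or $N=12$, or $\mathbb{Z}/2\times\mathbb{Z}/2N$ with $1\le N\le 4$. Since $E$ has a rational $2$-torsion point, the order is even. Now let $p$ be an odd prime dividing $|E(\mathbb{Q})_{\mathrm{tors}}|$. Then $E$ has a rational point of order $p$. With respect to a basis containing this point, the image $G_E(p)$ lies in a subgroup of matrices of the form $\begin{pmatrix}1&*\\0&*\end{pmatrix}$, which is proper in $\operatorname{GL}_2(\mathbb{F}_p)$. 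So $p$ would be exceptional, contradicting the hypothesis that $2$ is the only exceptional prime. Hence $|E(\mathbb{Q})_{\mathrm{tors}}|$ is a power of $2$. Checking Mazur's list, the possible orders are $2,4,8$ (cyclic) and $4,8,16$ (non-cyclic), so the order is $2^n$ with $n\in\{1,2,3,4\}$.

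For the second part, let $K$ have odd degree and take $P\in E(K)_{\mathrm{tors}}$. Write $P$ as a sum of components of prime-power order; each component is a multiple of $P$, so it lies in $E(K)$. It therefore suffices to handle points of prime-power order.

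\emph{Odd primes.} Suppose $p$ is odd and $E(K)$ contains a point $Q$ of order $p$. Since $p$ is not exceptional, $G_E(p)=\operatorname{GL}_2(\mathbb{F}_p)$, which acts transitively on the nonzero vectors of $\mathbb{F}_p^2$. Hence $[\mathbb{Q}(Q):\mathbb{Q}]=p^2-1$, which is even. But $\mathbb{Q}(Q)\subseteq K$ forces $[\mathbb{Q}(Q):\mathbb{Q}]$ to divide $[K:\mathbb{Q}]$, which is odd. This is a contradiction, so $E(K)$ has no odd-order torsion. A point of order $p^k$ would have a multiple of order $p$, so all odd prime-power components vanish.

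\emph{The prime $2$.} Since $E$ has a rational $2$-torsion point, $G_E(2)$ is $B(2)$ or $\mathrm{C}_s(2)$. Take $P\in E(K)$ of $2$-power order. Lemma~\ref{lemmaGrowth}(i) gives $[\mathbb{Q}(P):\mathbb{Q}]=2^n$. This must divide the odd number $[K:\mathbb{Q}]$, so $n=0$ and $P\in E(\mathbb{Q})$. Combining this with the odd-prime case gives $E(K)_{\mathrm{tors}}=E(\mathbb{Q})_{\mathrm{tors}}$.

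The main obstacle is the $2$-power step, since it relies on the degree bound in Lemma~\ref{lemmaGrowth}, which in turn uses \cite[Theorem~4.6]{GJN1}. I will take that lemma as given. The only other point to check is that full image $\operatorname{GL}_2(\mathbb{F}_p)$ really gives transitivity on nonzero vectors, and it clearly does.
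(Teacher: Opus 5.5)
Your proof is correct and takes essentially the same route as the paper. Odd-order torsion is ruled out because surjectivity of the mod-$p$ representation forces $[\mathbb{Q}(Q):\mathbb{Q}]=p^2-1$, the first part then follows from Mazur's classification, and Lemma~\ref{lemmaGrowth}(i) excludes new $2$-power torsion over fields of odd degree. The differences are cosmetic. You obtain $p^2-1$ directly from the transitivity of $\operatorname{GL}_2(\mathbb{F}_p)$ on nonzero vectors rather than quoting the cited degree theorem. For the rational case you note that a rational point of order $p$ would force $G_E(p)$ into a proper subgroup, whereas the paper gets the same conclusion by applying its degree argument with $K=\mathbb{Q}$.
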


\begin{proof}
Since $2$ is the unique exceptional prime for $E$, and $E$ has at least one rational $2$-torsion point, it follows that
\begin{itemize}
    \item $G_E(2)$ is either $B(2)$ or $\mathrm{C}_s(2)$ (see~\eqref{eqp2}),
    \item $G_E(p) = \operatorname{GL}_2(\mathbb{F}_p)$ for every odd prime $p$.
\end{itemize}
In particular, if $P \in E(\overline{\mathbb{Q}})$ is a point of odd prime order $p$, then $[\mathbb{Q}(P) : \mathbb{Q}] = p^2 - 1$ by~\cite[Theorem~5.1]{Alvaro}. Since $p^2 - 1$ is even, it follows that such a point cannot be defined over any number field of odd degree. In particular, $E(\mathbb{Q})$ does not contain torsion points of odd order. By Mazur’s classification~\cite{mazur}, we therefore have $|E(\mathbb{Q})_{\mathrm{tors}}| = 2^n$ for some $n \in \{1,2,3,4\}$.

We now proceed to prove the second part of the statement. Let $K$ be a number field of odd degree. As shown above, $E(K)_{\mathrm{tors}}$ cannot contain points of order divisible by an odd prime. Moreover, by Lemma~\ref{lemmaGrowth} (i), no new torsion points of $2$-power order can appear over number fields of odd degree. Hence the torsion subgroup does not grow from $\mathbb{Q}$ to $K$, and we conclude that
\[
E(K)_{\mathrm{tors}} = E(\mathbb{Q})_{\mathrm{tors}}.
\]
This completes the proof.
\end{proof}
}

\begin{remark}\label{remark6}
For the elliptic curves \(E_0, E_1^{\pm 1}, E_4\) and \(E_6^{\pm 1}\), the only exceptional prime is \(2\). Therefore, Proposition~\ref{propGrowth} shows that if \(K\) is a number field of odd degree, then the torsion subgroup does not grow over \(K\) for any of these elliptic curves. In particular, if {\( \operatorname{rank}_{\Z} E_1^{\pm 1}(K) = 0 \)}, then \texttt{cond}$_A(K)$ holds, since \( E_1, E_4 \), and \( E_6 \) belong to the same \( \Q \)-isogeny class.
\end{remark}

\section{Proof of Proposition \ref{prop}}\label{sec_proof_prop}
Let \( K \) be a number field and let \( L \) be a quadratic extension of \( K \). Suppose that \( (t, X_1, X_3) \in \mathcal{C}(L) \) is a point that does not appear in Tables~\ref{trivial_points} or~\ref{otherpoints}. We aim to show that either \( (t, X_1) \in C_0(K) \) or \( (-t, X_3) \in C_0(K) \).

To this end, we divide the proof into several steps. First, we prove that \( r = r(t) \in K \); then we establish that \( s = s(t) \in K \); afterwards, we deduce that \( t \in K \); and finally, we conclude that either \( (t, X_1) \in C_0(K) \) or \( (-t, X_3) \in C_0(K) \).

\begin{remark}\label{rem2}
The proof relies on the computation of the set of $K$-rational points on the genus one curves $C_i$, for $i=0,1,\dots,6$. Table~\ref{TableEC} displays the corresponding $\Q$-isomorphic elliptic curve, namely $E_0, E_1, E_1^{-1}, E_1, E_4, E_6^{-1}$ and $E_6$, respectively.

Observe that, since $\operatorname{rank}_{\mathbb{Z}} E_1^{\pm 1}(\Q) = 0$, and the curves $E_1, E_4, E_6$ are $\Q$-isogenous, the conditions $\texttt{cond}_A(K)$ and $\texttt{cond}_B(K)$ reduce to controlling the torsion subgroups of these curves, and their $(-1)$-quadratic twists, over~$K$. In particular, we have the following:\\
\indent $\bullet$ If $\texttt{cond}_A(K)$ holds, then $C_i(K) = C_i(\Q)$ for every $i=1,\dots,6$.
    
\indent $\bullet$ If $\texttt{cond}_B(K)$ holds, then in the case where $[K:\Q]$ is odd, Remark~\ref{remark6} ensures that $\texttt{cond}_A(K)$ holds, so the argument reduces to the previous case. If $[K:\Q]<8$, the condition $\operatorname{rank}_{\mathbb{Z}} E_1^{\pm 1}(K) = 0$ implies that the only points in $C_i(K)$, for $i=1,\dots,6$, which do not already lie in $C_i(\Q)$ correspond to those for which the torsion subgroup of the associated elliptic curve grows over~$K$. Recall that, for an elliptic curve $E$ defined over $\Q$, there exist only finitely many number fields of a given degree such that $E(\Q)_{\mathrm{tors}} \neq E(K)_{\mathrm{tors}}$. In our setting, all such fields with $[K:\Q]<8$ are explicitly listed in the Appendix. In particular, under the assumption $\operatorname{rank}_{\mathbb{Z}} E_1^{\pm 1}(K) = 0$, the only fields of degree less than $8$ over~$\Q$ where the torsion subgroup of the curves under consideration may grow are the quadratic fields $\Q(\sqrt{-3})$, $\Q(\sqrt{-1})$, $\Q(\sqrt{2})$, and $\Q(\sqrt{3})$; and the quartic fields $\Q(\zeta_8)$, $ \Q(\zeta_{12})$, $\Q(\alpha)$, and $\Q(\beta)$, where the minimal polynomial of $\alpha$ is $x^4-3x^2+3$, and that of $\beta$ is $x^4-2x^3-2x+1$.
\end{remark}  
\vspace{2mm}
\fbox{$r\in K$}  Assume that \( r \notin K \). Since \( [L:K] = 2 \), it follows that \( L = K(r) \), and in particular, the minimal polynomial of \( r \) over \( K \) is a quadratic polynomial \( Q(x) \in K[x] \).  Thus, for \( i=1,2 \), we have  $\varphi_i(t,X_1,X_3)=(r, \alpha_i+\beta_i r)\in C_i(L)$, for certain \( \alpha_1,\alpha_2 ,\beta_1,\beta_2 \in K \). Hence, there exist \( r_{1}, r_{2} \in K \) such that  
\[
\begin{array}{r}
(x+4)\left(x^{2}+4 x+16\right)-(\alpha_1+\beta_1 x)^{2} = Q(x)\left(x-r_{1}\right),\\
x\left(x^{2}+4 x+16\right)-(\alpha_2+\beta_2 x)^{2} = Q(x)\left(x-r_{2}\right).
\end{array}
\]
Thus, \( \left(r_{i}, \alpha_i+\beta_i r_{i}\right)\in C_i(K) \). For \( P_1=(x_1,y_1)\in C_1(K) \) and \( P_2=(x_2,y_2)\in C_2(K) \), let \( Q_{\beta_{1}}(x),Q_{\beta_{2}}(x)\in K[x] \) be such that  
\[
\begin{array}{r}
(x+4)\left(x^{2}+4 x+16\right)-(y_1-\beta_1 x_1+\beta_1 x)^{2} = Q_{\beta_1}(x)\left(x-{x_{1}}\right),\\
x\left(x^{2}+4 x+16\right)-(y_2-\beta_2 x_2+\beta_2 x)^{2} = Q_{\beta_2}(x)\left(x-{ x_{2}}\right).
\end{array}
\]
We solve the system of quadratic equations arising from the equality \( Q_{\beta_1}(x) = Q_{\beta_2}(x) \). In the case where a solution \( \beta_1, \beta_2 \in K \) exists, we obtain a polynomial \( Q(x) \). We discard the case where \( \deg Q(x) = 1 \), since \( [K(r):K] = 2 \). 

Next, we impose the condition \( r = s^2 \) in \( L \). If such an element \( s \) exists, we compute \( t \) from the equality \( t^2 - s t - 1 = 0 \). Finally, we verify that \( t \in L \), and that both \( G(t) \) and \( G(-t) \) are squares in \( L \).

Now, thanks to Remark~\ref{rem2}, in the case where either $\texttt{cond}_A(K)$ holds or we are under $\texttt{cond}_B(K)$ with $[K:\Q]$ odd, it suffices to carry out the preceding computations and verifications over $K = \Q$. For the remaining cases where $\texttt{cond}_B(K)$ holds and $[K:\Q] < 8$, we refer to Table~\ref{growth}, specifically the columns corresponding to the cases $E_1$ and $E_1^{-1}$. In these situations, the analysis reduces to a finite list of number fields, namely:
\[
\Q, \quad \Q(\sqrt{-3}), \quad \Q(\sqrt{-1}), \quad \Q(\sqrt{3}), \quad \Q(\zeta_8), \quad \Q(\zeta_{12}), \quad \Q(\alpha), \quad \text{and} \quad \Q(\beta).
\]
For each of these fields, and for every possible pair of points $P_1 \in C_1(K)$ and $P_2 \in C_2(K)$, one verifies that \( r \in K \), except in the case $K = \Q(\zeta_{12})$, where we obtain $r \in \left\{ \tfrac{1}{2}(-15 \pm \sqrt{-15}) \right\}$. However, these exceptional values correspond to $t \in \mathcal T$, which are explicitly excluded in the statement of Proposition~\ref{prop}. Therefore, in all relevant cases, we conclude that \( r \in K \).

\vskip 2mm
$\fbox{$s\in K$}$ 
Assume $r\in K$ and $s \notin  K$. Since \( [L:K] = 2 \), it follows that \( L = K(s) \), and in particular, $r=s^2\notin K^2$. Therefore, it follows from \eqref{eqGundert2} that there exist $\mu, \psi \in  K$ such that $s^{2}\pm 2 s+4=(\mu\pm\psi s)^{2}$. Since $s^2=r\in  K$, we obtain
\begin{equation}\label{sinK}
r^{2}+4 r+16=\left(s^{2}-2 s+4\right)\left(s^{2}+2 s+4\right)=\left(\mu^{2}-s^2 \psi^{2}\right)^{2}=\left(\mu^{2}-r \psi^{2}\right)^{2} \in  K^{2}.
\end{equation}
 Since $t^{2}-s t-1=0$, we have $t=(s\pm \alpha)/2$, where $\alpha\in K$ satisfies $\alpha^2=s^2+4=r+4$.  We split the proof in two cases:
\begin{itemize}
  \item Case $r+4 \in K^2$. Combining this with \eqref{sinK}, we obtain $(r+4)\left(r^{2}+4 r+16\right) \in K^{2}$. Hence, {$\varphi_1(t,X_1,X_3)=(r,\left(t^{-1}+t^{-3}\right)X_1X_3))\in C_1(K)$}. 
  \item Case $r+4 \notin  K^2$. In this case, $L=K(s)=K(\alpha)$, implying \( s\alpha \in K \). Thus \( r(r + 4) \in K^{2} \). Combining this with \eqref{sinK}, we establish {$\varphi_3(t,X_1,X_3)=(r,\left(1-t^{-4}\right)X_1X_3)\in C_3(K)$}. 
\end{itemize}

As in the previous case, thanks to Remark~\ref{rem2}, the required computations reduce to analyzing Table~\ref{growth}, specifically the curves \( C_1 \) and \( C_3 \), which correspond to the elliptic curve \( E_1 \). In this setting, the field \( K \) is one of the following number fields:
\[
\Q, \quad \Q(\sqrt{-3}), \quad \Q(\sqrt{-1}), \quad \Q(\sqrt{3}), \quad \text{and} \quad \Q(\zeta_{12}).
\]
For each of these cases, and for every point in either \( C_1(K) \) or \( C_3(K) \), let \( r \) denote its first coordinate. From the identity \( r = \left( t - \frac{1}{t} \right)^2 \notin K^2 \), we solve for \( t \), and thus obtain \( L = K(s) = K(t) \). Finally, we verify that the conditions \( G(t) \in L^2 \) and \( G(-t) \in L^2 \) are never simultaneously satisfied. Hence, we conclude that \( s \in K \).

\vskip 2mm
$\fbox{$t\in K$}$  
Assume that $r,s\in K$ and $t \notin K$. Since $[L:K]=2$, it follows that $L=K(t)$. We have the following identities:
\[
\begin{cases}
t^{2}=s t+1,\\
t^{4}=(s^{2}+2) t^{2}-1,\\
t^{8}=(s^{4}+4s^{2}+2) t^{4}-1.
\end{cases}
\]
From the first equality we deduce that $t^{2} \notin K$, since $s \ne 0$, otherwise $t=\pm 1$. From the second one, we have $t^{4} \notin K$, because otherwise $s^2+2=0$ and $t$ would be a primitive 8th root of unity, say $t=\zeta_8$. In this case, $L=\mathbb{Q}(\zeta_8)$ but $G(\zeta_8) \notin L^2$. 

Therefore, if we denote $w=t^4$, we have $L=K(w)$, and by the third identity, the minimal polynomial of $w$ over $K$ is
\begin{equation}\label{eqF}
F(x)=x^{2}-(s^{4}+4s^{2}+2)x+1 \in K[x].
\end{equation}
On the other hand, we have $\varphi_4(t,X_1,X_3)=(t^4, t^2 X_1, X_3) \in C_4(L)$. Hence, there exist $\alpha, \beta \in K$ such that
\[
w(w^{2}+14w+1)=(\alpha+\beta w)^{2}.
\]
In other words, there exists $\omega_0 \in K$ such that
\[
x(x^{2}+14x+1)-(\alpha+\beta x)^{2}=F(x)(x-\omega_0).
\]
Therefore, $(\omega_0, \alpha+\beta \omega_0) \in C_4(K)$. 

For any point $P=(x_0,y_0) \in C_4(K)$, we have
\begin{equation}\label{eqFbeta}
x(x^{2}+14x+1)-(y_0-\beta x_0+\beta x)^{2}=F_{\beta}(x)(x-x_0).
\end{equation}

Consider the point $(0,0) \in C_4(K)$. Then we obtain $F_{\beta}(x)=x^2+(14-\beta^2)x+1$. Comparing with \eqref{eqF}, we deduce that $14-\beta^{2}=-(s^{4}+4s^{2}+2)$, so that $
s^{4}+4s^{2}+16=\beta^{2}$. Thus, $(s,\beta) \in C_5(K)$.

Let $P=(x_0,y_0) \in C_4(K)$, with $P \ne (0,0)$. Calculating $F_{\beta}(x)$ in \eqref{eqFbeta} and comparing with \eqref{eqF}, we obtain the following system of equations:
\[
\begin{cases}
\beta^2 x_0-2\beta y_0+x_0^2+14x_0=0,\\
s^4+4s^2+x_0+16-\beta^2=0.
\end{cases}
\]
In particular, it must have solutions $\beta, s \in K$. In this case, the polynomial $x^2-s x-1$ must be irreducible over $K$. Finally, we need to verify that both $G(t)$ and $G(-t)$ are squares in $L=K(t)$.

Once again, Remark~\ref{rem2} ensures that, when considering the case \( (0,0) \in C_4 \), the necessary computations and verifications must be performed using the points on the curve \( C_5(K) \), over a finite number of number fields \( K \). According to Table~\ref{growth}, in the column corresponding to this curve, namely, \( E_6^{-1} \), the relevant number fields are
\[
\Q, \quad \Q(\sqrt{-3}), \quad \Q(\sqrt{-1}), \quad \Q(\sqrt{3}), \quad \Q(\zeta_8), \quad \Q(\zeta_{12}), \quad \text{and} \quad \Q(\beta).
\]
In all of these cases, we verify that \( t \in K \).

Finally, when \( P \neq (0,0) \), we refer to Table~\ref{growth}, specifically the column corresponding to the curve \( C_4 \), that is, \( E_4 \). The relevant number fields are
\[
\Q, \quad \Q(\sqrt{2}), \quad \text{and} \quad \Q(\sqrt{3}).
\]
Again, we conclude that in all these cases, \( t \in K \).

This completes the proof that $t\in K$.

\vskip 2mm
\fbox{Final step} Assume that $r,s,t\in K$. Since $[L:K]=2$, there exists $\alpha \notin K$ such that $\alpha^2 \in K$ and $L=K(\alpha)$. By \eqref{eq2C}, there exist $\gamma_1, \delta_1, \gamma_3, \delta_3 \in K$ such that $X_1 = \delta_{1} + \gamma_{1}\alpha$ and $X_3 = \delta_{3} + \gamma_{3}\alpha$, and we have
$$
\begin{cases}
 G(t) = \left(\delta_{1} + \gamma_{1}\alpha\right)^{2}, \\
 G(-t) = \left(\delta_{3} + \gamma_{3}\alpha\right)^{2}.
\end{cases}
$$
Since $t \in K$ and $\alpha \notin K$, we deduce that $\gamma_{1}\delta_{1} = \gamma_{3}\delta_{3} = 0$. Thus, there are four possible cases:

\begin{itemize}
  \item[$\bullet$] $\gamma_{1} = \gamma_{3} = 0$. In this case, $\varphi_6(t,X_1,X_3) = \left(t^{2}, \delta_{1} \delta_{3}\right) \in C_6(K)$. 

  \item[$\bullet$] $\delta_{1} = \delta_{3} = 0$. Equivalently, $\varphi_6(t,X_1,X_3) = \left(t^{2}, \alpha^2 \gamma_{1} \gamma_{3}\right) \in C_6(K)$.

  \item[$\bullet$] $\gamma_{1} = \delta_{3} = 0$. We have $\varphi_0(t,X_1,X_3) = \left(t, \delta_{1}\right) \in C_0(K)$. In particular, $X_1=\delta_1$.

  \item[$\bullet$] $\delta_{1} = \gamma_{3} = 0$. We obtain $\varphi'_0(t,X_1,X_3) = \left(-t, \delta_{3}\right) \in C_0(K)$. In particular, $X_3=\delta_3$.
\end{itemize}

To conclude the proof of Proposition~\ref{prop}, it remains to show that the cases \( \gamma_{1} = \gamma_{3} = 0 \) and \( \delta_{1} = \delta_{3} = 0 \) are not possible. Once again, invoking Remark~\ref{rem2}, this reduces to analyzing the column corresponding to \( C_6 \) in Table~\ref{growth}, that is, the elliptic curve \( E_6 \). The relevant number fields in which the computations must be carried out are $\Q$, $\Q(\zeta_8)$, and $\Q(\zeta_{12})$. For each of these fields, and for every point \( (x_0, y_0) \in C_6(K) \), we must verify whether \( x_0 = t^2 \in K^2 \). Moreover, among all such cases, we discard those yielding trivial points on \( \mathcal{C} \), for instance, those with \( x_0 \in \{0, \pm 1\} \) or \( y_0 = 0 \). No admissible cases arise from this analysis. This completes the proof of Proposition~\ref{prop}.
 
\section{Proof of Theorem \ref{quad_points}}\label{sec_proof_quad_points}

Let $K$ be a number field, and suppose that either $\texttt{cond}_A(K)$ or $\texttt{cond}_B(K)$ holds. Let $L$ be a quadratic extension of $K$, and let $(t, X_1, X_3) \in \mathcal{C}(L)$ be a non-trivial point. We divide the proof according to whether $t \in \mathcal{T}$ or not.

First, assume that $t \notin \mathcal{T}$. Then Proposition~\ref{prop} allows us to conclude that either $(t, X_1) \in C_0(K)$ or $(-t, X_3) \in C_0(K)$. Assume further that the class number of $K$ is $1$. In the case where $(t, X_1) \in C_0(K)$, there exist elements $u = X_1$, $v$, $m \in K$ such that $G(t) = u^2$ and $G(-t) = m v^2$. On the other hand, if $(-t, X_3) \in C_0(K)$, there exist elements $v = X_3$, $u$, $n \in K$ such that $G(t) = n u^2$ and $G(-t) = v^2$. 

Thanks to the parametrization~\eqref{eqPara} given in Section~\ref{sec_setting}, we deduce that $[X_0 : X_1 : X_2 : X_3 : X_4]$ belongs to $\mathcal{P}_1(t, u)$ if $(t, X_1) \in C_0(K)$, and belongs to $\mathcal{P}_3(-t, v)$ if $(-t, X_3) \in C_0(K)$. This proves part~(i) of Theorem~\ref{quad_points}.

Moreover, if $E_0(K) = E_0(\mathbb{Q})$, then $C_0(K) = C_0(\mathbb{Q})$, and therefore $t, X_1 \in \mathbb{Q}$ or $t, X_3 \in \mathbb{Q}$, which proves part~(ii). This completes the proof of Theorem~\ref{quad_points} in the case $t \notin \mathcal{T}$.

We now assume that $t \in \mathcal{T}$. Then the corresponding points is the set $\mathcal{P}_{\mathcal{T}}$ (see Table~\ref{otherpoints}). In particular, we have $\sqrt{-1}, \sqrt{3}, \sqrt{5} \in L$. Therefore, $K$ must be contained in $\mathbb{Q}(\sqrt{-1}, \sqrt{3}, \sqrt{5})$, and $[L : K] \leq 2$. Among all possible number fields $K$ satisfying these properties, the only one for which either $\texttt{cond}_A(K)$ or $\texttt{cond}_B(K)$ holds is $K = \mathbb{Q}(\zeta_{12})$. Thus, we have $\mathcal{P}_{\mathcal{T}} \subset \mathcal{C}(K(\sqrt{5})) \subset \Gamma_2^*(\mathcal{C}, \mathbb{Q}(\zeta_{12}))$. Moreover, since $E_0(\mathbb{Q}(\zeta_{12})) = E_0(\mathbb{Q})$, a similar argument to the proof of (ii) allows us to conclude that
\[
\Gamma_2^*(\mathcal{C}, \mathbb{Q}(\zeta_{12})) = \mathcal{S}_1^*(\mathbb{Q}) \cup \mathcal{S}_3^*(\mathbb{Q}) \cup \mathcal{P}_{\mathcal{T}}.
\]
This finishes the proof of Theorem~\ref{quad_points}.

\section{Proof of Theorem \ref{main}}\label{sec_proof_main}

Let \( K \) be a number field with \( K \ne \Q(\zeta_{12}) \), and suppose that either \( \texttt{cond}_A(K) \) or \( \texttt{cond}_B(K) \) holds. Let \( L \) be a quadratic extension of \( K \). As seen in Section~\ref{sec_setting}, any arithmetic progression of five squares $(a_1^2,a_2^2,a_3^2,a_4^2,a_5^2)$ with $a_1,a_2,a_3,a_4,a_5 \in L$ corresponds to a point on $\mathcal{C}(L)$, and vice versa.  

Now assume that the class number of $K$ is $1$. Using the characterization provided in Theorem~\ref{quad_points}, we can analyze the possible arithmetic progressions of five squares. That is, for each point $(t, u) \in C_0(K)$, we obtain a progression arising from $\mathcal{P}_1(t,u)$ of the form:
\[
((t^{2}-2t-1)^2,\, u^2,\, (t^{2}+1)^2,\, mv^2,\, (t^{2}+2t-1)^2).
\]
{ Note that if \( m \in K^2 \), then the above arithmetic progression is already defined over \( K \); hence, it is not properly defined over any quadratic extension of \( K \) }. For each point $(-t, v) \in C_0(K)$, we obtain a progression arising from $\mathcal{P}_3(-t, v)$ of the form:
\[
((t^{2}+2t-1)^2,\, nu^2,\, (t^{2}+1)^2,\, v^2,\, (t^{2}-2t-1)^2).
\]
{ (The case \( n \in K^2 \) is analogous to the case $m\in K^2$, and the same conclusion follows).}
Note this last arithmetic progression is equivalent to its reverse, and hence any non-elementary arithmetic progression of five squares properly defined over $L$ is, up to equivalence, of the form $\left(a_1^2,a_2^2,a_3^2,\alpha a_4^2,a_5^2\right)$, where $a_1,a_2,a_3,a_4,a_5, \alpha \in K$ and $\alpha$ is a non-square in $K$. This completes the proof of part (i).

We now prove part (ii). Suppose that $K \ne \Q$ and \( E_0(K) = E_0(\Q) \). As in the previous case, all relevant points satisfy $(t, u) \in C_0(\Q)$ or $(-t, v) \in C_0(\Q)$. Therefore, any corresponding non-elementary arithmetic progression of five squares is defined over $\Q(\sqrt{m})$ for some square-free $m \in \Q$, which is not properly defined over $L$. This proves part (ii).

\section{Proof of Theorem \ref{Qz12}}\label{sec_proof_Qz12}

In Theorem~\ref{quad_points}, we proved that $\Gamma_2^*(\mathcal{C}, \mathbb{Q}(\zeta_{12})) = \mathcal{S}_1^*(\mathbb{Q}) \cup \mathcal{S}_3^*(\mathbb{Q}) \cup \mathcal{P}_{\mathcal{T}}$. Hence, by an argument analogous to that used in the proof of Theorem~\ref{main}~(ii), any non-elementary arithmetic progression of five squares arising from points in \( \mathcal{S}_1^*(\mathbb{Q}) \cup \mathcal{S}_3^*(\mathbb{Q}) \) is defined over a quadratic extension of \( \mathbb{Q} \), and therefore is not properly defined over a quadratic extension of $\mathbb{Q}(\zeta_{12})$.

It remains to consider the points in \( \mathcal{P}_{\mathcal{T}} \). From these, we obtain the following arithmetic progressions of five squares defined over $L= \mathbb{Q}(\zeta_{12}, \sqrt{5})$:
\[
(3, 1, -1, -3, -5), \quad (-5, -3, -1, 1, 3), \quad (-3, -1, 1, 3, 5), \quad (5, 3,1, -1, -3).
\]
All of these are equivalent, over \( L  \), to the progression $(-3, -1, 1, 3, 5)$. This completes the proof.

\section{Proof of Theorem \ref{six}}\label{sec_proof_six}

Let \( K \) be a number field satisfying either \( \texttt{cond}_A(K) \) or \( \texttt{cond}_B(K) \), and let \( L \) be a quadratic extension of \( K \). Theorem \ref{six} follows as a consequence of Theorems \ref{main} and \ref{Qz12}.

We begin by proving that if \( K \ne \Q(\zeta_{12}) \), then no non-elementary arithmetic progression of five squares properly defined over \( L \) can be extended to a progression of six squares.

Suppose, that there exists a non-elementary arithmetic progression of six squares properly defined over \( L \), say $(a_1^2,a_2^2,a_3^2,a_4^2,a_5^2, a_6^2)$. {Then $(a_1^2,a_2^2,a_3^2,a_4^2,a_5^2)$ or $(a_2^2,a_3^2,a_4^2,a_5^2,a_6^2)$ is a }non-elementary arithmetic progressions of five squares properly defined over \( L \). We now consider the different possibilities for \( K \):
\begin{itemize}
\item If \( K = \Q \), then it is well known that no non-constant arithmetic progression of six squares exists over any quadratic extension of \( \Q \) (cf. \cite{X}).

\item If \( K \ne \Q \) and \( E_0(K) = E_0(\Q) \), then Theorem \ref{main}~(ii) implies that no non-elementary arithmetic progression of five squares can be properly defined over \( L \), hence such a progression of length six cannot exist.

\item Otherwise, we assume that the class number of \( K \) is one. In this setting, Theorem \ref{main}~(i) applies, and any non-elementary arithmetic progression of five squares properly defined over \( L \) is equivalent to a progression of the form $\left(x_1^2, x_2^2, x_3^2, \alpha \, x_4^2, x_5^2\right)$, where \( x_1, x_2, x_3, x_4, x_5, \alpha \in K \), and \( \alpha \in K \) is a non-square.

Now, assume that one of the two five-tuples, say \( (a_1^2,a_2^2,a_3^2,a_4^2,a_5^2) \), is equivalent to such a progression. That is, there exists \( s \in K^\times \) such that $(s a_1^2,s a_2^2,s a_3^2,s a_4^2,s a_5^2) = (x_1^2, x_2^2, x_3^2, \alpha \, x_4^2,$ $ x_5^2)$. From the identities \( sa_1^2 = x_1^2 \) and \( s a_4^2 = \alpha \, x_4^2 \), we deduce that \( \alpha \in K^2 \), contradicting the assumption that \( \alpha \) is not a square in \( K \). Thus, no such progression of length six exists.
\end{itemize}

It remains to consider the case \( K = \Q(\zeta_{12}) \). As shown in Theorem \ref{Qz12}, the only non-elementary arithmetic progression of five squares properly defined over a quadratic extension of \( \Q(\zeta_{12}) \), up to equivalence, is $(-3, -1, 1, 3, 5)$, 
which is defined over \( L = \Q(\zeta_{12}, \sqrt{5}) \).

Any non-elementary arithmetic progression of six squares properly defined over \( L \) must extend this progression. This yields two natural candidates: $(-5, -3, -1, 1, 3, 5)$ and $(-3, -1, 1, 3, 5, 7)$. However, only the first of these is defined over \( \Q(\zeta_{12}, \sqrt{5}) \). Therefore, up to equivalence, this is the unique non-elementary arithmetic progression of six squares properly defined over a quadratic extension of \( \Q(\zeta_{12}) \).

\section*{Appendix}
This appendix collects all the necessary information required to carry out the computations performed in this article. In particular, we include the basic data concerning the curves and number fields that appear throughout this work.

For each curve \(C_i\), \(i=0,1,\dots,6\), the second row of Table~\ref{TableEC} displays the corresponding \(\Q\)-isomorphic elliptic curve, while the third row shows its label in the LMFDB database~\cite{lmfdb}.

\begin{longtblr}
[caption={Elliptic curves}, label={TableEC}]
{hlines,vlines,colspec  =cccccc }
$C_0$   & $C_1$   & $C_2$ &  $C_3$  & $C_4$ & $C_5$ & $C_6$ \\
$E_0$   & $E_1$   & $E^{-1}_1$ & $E_1$ & $E_4$ &  $E^{-1}_6$  & $E_6$\\
\href{https://www.lmfdb.org/EllipticCurve/Q/192a2/}{\texttt{192.a2}} &  
\href{https://www.lmfdb.org/EllipticCurve/Q/24a4/}{\texttt{24.a5}} &  
\href{https://www.lmfdb.org/EllipticCurve/Q/48a4/}{\texttt{48.a5}} &   
\href{https://www.lmfdb.org/EllipticCurve/Q/24a4/}{\texttt{24.a5}} &  
\href{https://www.lmfdb.org/EllipticCurve/Q/24a3/}{\texttt{24.a2}} &  
\href{https://www.lmfdb.org/EllipticCurve/Q/48a1/}{\texttt{48.a4}}  & 
\href{https://www.lmfdb.org/EllipticCurve/Q/24a1/}{\texttt{24.a4}} \\
\end{longtblr}

Note that the elliptic curves \(E_1, E_4\), and \(E_6\) belong to the same \(\Q\)-isogeny class. Therefore, for any number field \(K\), we have \(\operatorname{rank}_{\Z} E_1(K) = \operatorname{rank}_{\Z} E_4(K) = \operatorname{rank}_{\Z} E_6(K)\). In particular, the ranks of any of their quadratic twists also coincide.

Assuming that \(\operatorname{rank}_{\Z} E_1^{\pm 1}(K) = 0\) for a given number field \(K\), computing \(C_i(K)\) for \(k=i, \dots, 6\) reduces to computing the torsion subgroup over \(K\). 

{ In our particular setting, given an elliptic curve \(E\) defined over \(\Q\), there are only finitely many number fields of fixed degree \(d\) such that \(E(\Q)_{\mathrm{tors}} \neq E(K)_{\mathrm{tors}}\). If the conductor of \(E\) is less than \(400.000\), these number fields appear\footnote{Torsion growth data was computed by the author and Filip Najman~\cite{GJN2}.} in the LMFDB database~\cite{lmfdb} for $d<24$.}

Table~\ref{growth} illustrates the torsion growth for the elliptic curves considered in this article. The first column lists all number fields of degree \(<8\) where the torsion grows. For each such field, the subsequent columns display the corresponding torsion subgroup over that field. We use the notation \([n]\) to denote the cyclic group of order \(n\), and \([n,m]\) to denote the product of cyclic groups of orders \(n\) and \(m\). Let \(\alpha \in \overline{\Q}\) be a root of the irreducible polynomial \(x^4 - 3x^2 + 3\), and let \(\beta \in \overline{\Q}\) be a root of the irreducible polynomial \(x^4 - 2x^3 - 2x + 1\).

{
For example, the elliptic curve $E_0$ has LMFDB label \texttt{192.a2}. Therefore, to obtain the relevant information about the growth of the torsion subgroup of this elliptic curve, it suffices to consult the corresponding LMFDB webpage:
\begin{center}
\href{https://www.lmfdb.org/EllipticCurve/Q/192.a2/}{https://www.lmfdb.org/EllipticCurve/Q/192.a2/}
\end{center}
and to examine the section entitled \emph{``Growth of torsion in number fields''}, which lists the number fields over which the torsion of this elliptic curve grows. In particular, this webpage shows that if the torsion of $E_0$ grows over a number field $K$ of degree less than~8, then $K$ must be one of
\[
\mathbb{Q}(\sqrt{-2}), \quad \mathbb{Q}(\sqrt{2},\sqrt{3}), \quad \text{or} \quad \mathbb{Q}(\sqrt{-1}, \sqrt{6}),
\]
and in all of these fields the torsion subgroup satisfies $E_0(K)_{\mathrm{tors}} \simeq \mathbb{Z}/2\mathbb{Z} \times \mathbb{Z}/4\mathbb{Z}$.
}

\begin{longtblr}
[label={growth}, caption={Primitive Torsion Growth over number field of degree $< 8$}]
{cells = {mode=imath},hlines,vlines,
colspec=ccccccc,rowhead=1}
   K   & E_1(K)_{\mathrm{tors}} & E^{-1}_1(K)_{\mathrm{tors}} & E_4(K)_{\mathrm{tors}} & E_6(K)_{\mathrm{tors}} & E^{-1}_6(K)_{\mathrm{tors}} & E_0(K)_{\mathrm{tors}} \\
\hline
\Q  & [4]  & [2] & [4] & [2,4] & [2,2] &[2,2] \\
\hline 
\Q(\sqrt{-3}) & [2,4] & [2,2] & - & - & [2,4] & -\\
\Q(\sqrt{-2}) & - & - & - & - & - & [2,4] \\
\Q(\sqrt{-1}) & [8] & [8] & - & - & [2,4] & - \\
\Q(\sqrt{2}) & - & - & [8] & - & - & - \\
\Q(\sqrt{3}) & [8] & [4] & [2,4] & - &  [2,4] & - \\
\Q(\sqrt{6}) & - & - & [8] & - & - & - \\
\hline
\Q(\zeta_8) & - & - & - & [2,8] & [2,8] & - \\
\Q(\zeta_{12}) & [2,8] & [2,8] & - & [4,4] & [4,4] & - \\
\Q(\alpha) & - & [2,4] & - & - & - & - \\
\Q(\beta) & - & [8] & - & - & [2,8] & - \\
\Q(\sqrt{2},\sqrt{3}) & - & - & [2,8] & [2,8] & - & [2,4]\\
\Q(\sqrt{-1},\sqrt{6}) & - & - & - & - & - & [2,4] \\
\end{longtblr}

Note that for those elliptic curves and number fields $K$, we have $\operatorname{rank}_{\Z}E_1(K)\ne 0$ only if $\sqrt{6}\in K$;  $\operatorname{rank}_{\Z}E_1^{-1}(K)\ne 0$ only if $K=\Q(i,\sqrt{6})$; and $\operatorname{rank}_{\Z}E_0(K)\ne 1$ only if $K=\Q(i,\sqrt{6})$;. Finally, for those number fields $K$ of degree $4$, such that $\operatorname{rank}_{\Z}E^{\pm 1}_1(K)= 0$, Tables \ref{pointscyclo} and \ref{pointsNOcyclo}  show  $C_i(K)$ for $i=1,\dots,6$.

The following diagram illustrates the lattice of number fields associated with the growth of the torsion of elliptic curves but does not include those in which the rank increases for any of the elliptic curves.

\begin{center}
\begin{tikzpicture}[
    node distance=1.3cm and 1.3cm, 
    every node/.style={inner sep=1pt}, 
    every path/.style={draw} 
    ]
    \node (Q) at (0,0) {$\mathbb{Q}$};  
    \node (Qi)  at (0,1) {$\mathbb{Q}(\sqrt{-1})$};
           \node (Qz8) at (-1,2) {$\mathbb{Q}(\zeta_8)$}; 
   \node (Qm2) at (-2,1) {$\mathbb{Q}(\sqrt{-2})$};
             \node (Qalpha) at (-3,2) {$\mathbb{Q}(\alpha)$}; 
    \node (Qm3) at (-4,1) {$\mathbb{Q}(\sqrt{-3})$};
            \node (Qz12) at (1,2) {$\mathbb{Q}(\zeta_{12})$}; 
   \node (Q2)  at (2,1) {$\mathbb{Q}(\sqrt{2})$};
    \node (Q3)  at (4,1) {$\mathbb{Q}(\sqrt{3})$};
          \node (Qbeta) at (3,2) {$\mathbb{Q}(\beta)$}; 
    \draw (Q) -- (Q2);
    \draw (Q) -- (Q3);
    \draw (Q) -- (Qm2);
    \draw (Q) -- (Qm3);
    \draw (Q) -- (Qi);
    
    \draw (Qi) -- (Qz8);
    \draw (Qm2) -- (Qz8);
    \draw (Q2) -- (Qz8);

       \draw (Qi) -- (Qz12);
     \draw (Qm3) -- (Qz12);
     \draw (Q3) -- (Qz12);
    
     \draw (Q3) -- (Qbeta);
     \draw (Qm3) -- (Qalpha);
\end{tikzpicture}
\end{center}

\begin{longtblr}
[caption = {Points on the curves over $\Q(\zeta_8)$ and $\Q(\zeta_{12})$}, label=pointscyclo]
{cells = {mode=imath},hlines,vlines,colspec  = ccccc,rowhead=1}
 K & \Q(\zeta_8) & \Q(\zeta_{12})  \\ 
 
C_1(K)&
 \begin{array}{c}
    (-4, 0),(0, -8),\\ 
 (\pm 4 \zeta_{8}^2 - 4, 8)
   \end{array}&
 \begin{array}{c} 
      (-4, 0),(0, -8),\\ 
 (-4 \zeta_{12}^2, 0),\,(4 \zeta_{12}^2 - 4, 0),\, (-8, 16 \zeta_{12}^2 - 8),\\
  (\pm 4 \zeta_{12}^3 - 4, 8),\\ 
  (-4 \zeta_{12}^3 + 8 \zeta_{12} + 4, 16 \zeta_{12}^3 - 32 \zeta_{12} - 24),\\ 
    (4 \zeta_{12}^3 - 8 \zeta_{12} + 4, 16 \zeta_{12}^3 - 32 \zeta_{12} + 24)
   \end{array} \\  
C_2(K)&
 \begin{array}{c}
 (0, 0),\\
  (\pm 4 \zeta_{8}^2, 8 \zeta_{8}^2),\, (-4, -8 \zeta_{8}^2)
  \end{array}&
 \begin{array}{c} 
  (0, 0),\\
   (4 \zeta_{12}^2 - 4, 0),\, (-4 \zeta_{12}^2, 0),\\ 
   (-4 \zeta_{12}^3, -8 \zeta_{12}^3),\,   (-4, -8 \zeta_{12}^3),\, (4 \zeta_{12}^3, 8 \zeta_{12}^3),\\ 
   (4, -8 \zeta_{12}^3 + 16 \zeta_{12}),\\ 
    (4 \zeta_{12}^3 - 8 \zeta_{12} - 8, -24 \zeta_{12}^3 - 32 \zeta_{12}^2 + 16),\\ 
   (-4 \zeta_{12}^3 + 8 \zeta_{12} - 8, 24 \zeta_{12}^3 - 32 \zeta_{12}^2 + 16)
  \end{array}
 \\  
C_3(K)&
 \begin{array}{c}
 (0, 0),\, (-4, 0),\\ 
 (\pm 2 \zeta_{8}^2 - 2, 8 \zeta_{8}^2)\\
  \end{array}&
 \begin{array}{c} 
 (0, 0),\, (-4, 0),\\ 
 (-4 \zeta_{12}^2, 0),\, (4 \zeta_{12}^2 - 4, 0),\, (-2, 8 \zeta_{12}^2 - 4),\\ 
 (\pm 2 \zeta_{12}^3 - 2, 8 \zeta_{12}^3),\\ 
 (-2 \zeta_{12}^3 + 4 \zeta_{12} - 2, 8 \zeta_{12}^3 - 16 \zeta_{12}),\\ 
 (2 \zeta_{12}^3 - 4 \zeta_{12} - 2, 8 \zeta_{12}^3 - 16 \zeta_{12})
 \end{array}
  \\  
C_4(K)&
 \begin{array}{c}
  (0, 0)\,, (1, 4),\\ 
 (-2 \zeta_{8}^3 + 2 \zeta_{8} - 3, -6 \zeta_{8}^3 + 6 \zeta_{8} - 8),\\ 
 (2 \zeta_{8}^3 - 2 \zeta_{8} - 3, -6 \zeta_{8}^3 + 6 \zeta_{8} + 8),\\ 
 \end{array}&
 \begin{array}{c} 
 (0, 0)\,, (1, 4),\\
  (-1, -2 \zeta_{12}^3 + 4 \zeta_{12}),\\ 
   (\pm(4 \zeta_{12}^3 - 8 \zeta_{12} - 7), 0)
 \end{array}
  \\  
C_5(K)&
 \begin{array}{c}
 (0, 4),\\ 
  (\pm 2 \zeta_{8}^2, 4),\\
    (\pm 2 \zeta_{8}^3, 4 \zeta_{8}^3),\, (\pm 2 \zeta_{8}, 4 \zeta_{8}),\\
    \end{array}&
 \begin{array}{c}
   (0, 4),\\ 
     (\pm 2 \zeta_{12}^2, 0),\,  (\pm(2 \zeta_{12}^2 - 2), 0),\\ 
    (\pm 2 \zeta_{12}^3, 4),\\
     (\pm 2, -4 \zeta_{12}^3 + 8 \zeta_{12})
    \end{array}
  \\ 
  
C_6(K)&
 \begin{array}{c}
 (0, 1),\, (1, 4),\, (-1, 4),\\ 
  (\pm(\zeta_{8}^3 - \zeta_{8}^2 + \zeta_{8}), 2 \zeta_{8}^3 - 4 \zeta_{8}^2 + 2 \zeta_{8}),\\ 
  (\pm(\zeta_{8}^3 + \zeta_{8}^2 + \zeta_{8}), -2 \zeta_{8}^3 - 4 \zeta_{8}^2 - 2 \zeta_{8})
    \end{array}
 & 
 \begin{array}{c} 
 (0, 1),\, (1, 4),\, (-1, 4)\\ 
   (\pm(2 \zeta_{12}^3 + 2 \zeta_{12}^2 - 1), 0),\\
  (\pm(2 \zeta_{12}^3 - 2 \zeta_{12}^2 + 1), 0),\\ 
   (\pm \zeta_{12}^3, 4 \zeta_{12}^2 - 2)
    \end{array}
\end{longtblr}

\begin{longtblr}
[caption = {Points on the curves over $\Q(\alpha)$ and $\Q(\beta)$}, label=pointsNOcyclo]
{cells = {mode=imath},hlines,vlines,colspec  = ccccc,rowhead=1}
 K & \Q(\alpha) & \Q(\beta)  \\ 
 
C_1(K) 
& \begin{array}{c}
(-4, 0)\,,(0, -8),\\
(-4 \alpha^2 + 4, 0),\\ 
(-8, 16 \alpha^2 - 24),\\ 
(4 \alpha^2 - 8, 0)
\end{array}
 & \begin{array}{c}
 (-4, 0)\,,(0, -8),\\
\!\!\!\!\!\!\!\!\!\! \!\!\!\!\!\! \!\!\!\!\!\!\!\! \!\!\!\!\!\!\!\! \!\!\!\!\!\!\!\! \!\!\!\!\!\!\!\! \!\!\!\!\!\!\!\! 
(-4 \beta^3 + 8 \beta^2 + 4 \beta + 8,\\ \qquad \qquad \qquad \quad \quad   -16 \beta^3 + 32 \beta^2 + 16 \beta + 40),\\
 \!\!\!\!\!\!\!\!\!\! \!\!\!\!\!\! \!\!\!\!\!\!\!\! \!\!\!\!\!\!\!\! \!\!\!\!\!\!\!\! \!\!\!\!\!\!\!\! \!\!\!\!\!\!\!\! 
 (4 \beta^3 - 8 \beta^2 - 4 \beta, \\ \qquad \qquad \qquad -16 \beta^3 + 32 \beta^2 + 16 \beta - 8)\end{array} 
 \\  
C_2(K)
 & \begin{array}{c}
 (0, 0),\\
   (-4 \alpha^2 + 4, 0),\,(4 \alpha^2 - 8, 0),\\
  (-4 \alpha^3 - 4 \alpha^2 + 4 \alpha + 4, -8 \alpha^2 - 8 \alpha),\\ 
  (4 \alpha^3 - 4 \alpha^2 - 4 \alpha + 4, -8 \alpha^2 + 8 \alpha)
  \end{array}
 & \begin{array}{c}
 (0, 0),\\
 (4 \beta^2, -8 \beta^3 - 8 \beta + 8),\\
  (4, 8 \beta^3 - 16 \beta^2 - 8 \beta - 8),\\ 
  \!\!\!\!\!\!\!\!\!\! \!\!\!\!\!\! \!\!\!\!\!\!\!\! \!\!\!\!\!\!\!\! \!\!\!\!\!\!\!\! \!\!\!\!\!\!\!\! \!\!\!\!\!\!\!\! 
(-8 \beta^3 + 12 \beta^2 + 8 \beta + 16,  \\ \qquad \qquad \qquad -40 \beta^3 + 64 \beta^2 + 24 \beta + 88)
  \end{array} 
 \\  
C_3(K)
& \begin{array}{c}
(0, 0),\, (-4, 0),\\
 (-4 \alpha^2 + 4, 0)\,,(4 \alpha^2 - 8, 0), \\(-2, -8 \alpha^2 + 12)
 \end{array}
 & \begin{array}{c}
 (0, 0)\,, (-4, 0),\\ 
 (2 \beta^3 - 4 \beta^2 - 2 \beta - 4, 8 \beta^3 - 16 \beta^2 - 8 \beta - 8),\\ 
 (-2 \beta^3 + 4 \beta^2 + 2 \beta, 8 \beta^3 - 16 \beta^2 - 8 \beta - 8)
 \end{array} 
 \\  
C_4(K)
& \begin{array}{c}
(1, -4)\,, (0, 0)
\end{array}
 & 
 \begin{array}{c}
 (1, -4)\,, (0, 0),\\
(-4 \beta^3 + 8 \beta^2 + 4 \beta - 3, 0),\\ 
 (-1, -2 \beta^3 + 4 \beta^2 + 2 \beta + 2),\\
  (4 \beta^3 - 8 \beta^2 - 4 \beta - 11, 0)\end{array}
  \\  
C_5(K) & 
\begin{array}{c}
(0, 4),\\
(\pm(2 \alpha^2 - 4), 0),\\ 
(\pm(2 \alpha^2 - 2), 0)
\end{array}
  & \begin{array}{c}
  (0, 4),\\ 
   (\pm 2, -4 \beta^3 + 8 \beta^2 + 4 \beta + 4),\\
         (\pm 2 \beta, -4 \beta^3 + 4 \beta^2 + 4),\\
    (\pm(2 \beta^3 - 4 \beta^2 - 4), 8 \beta^3 - 12 \beta^2 - 4 \beta - 20)
     \end{array} 
 \\  
C_6(K)
& \begin{array}{c}(0, 1)\,,(1, 4)\,, (-1, 4)\end{array}& \begin{array}{c}(0, 1)\,, (1, 4)\,, (-1, 4)\end{array} 
 \\  
\end{longtblr}

{
\begin{ack}
The author thanks the referee for helpful comments and suggestions that improved the paper.
\end{ack}
}

\end{document}